\numberwithin{equation}{section}
\newtheorem{theorem}{Theorem}[section]
\newtheorem{proposition}[theorem]{Proposition}
\newtheorem{corollary}[theorem]{Corollary}
\newtheorem{lemma}[theorem]{Lemma}
\newtheorem{question}[theorem]{Question}
\newtheorem{remark}[theorem]{Remark}
\newtheorem{definition}[theorem]{Definition}
\begin{document}
\begin{center} {\Large A study of the length function of generalized fractions\\ of modules}
 \footnote [1]{\noindent{\bf Key words and phrases:} System of parameters; Generalized fractions; Limit closure; Local cohomology; Macaulayfication; Hilbert-Kunz fuction.\\
\indent{\bf AMS Classification 2010:}13H15; 13D40; 13D45.\\
This research is supported by Vietnam National Foundation for Science
and Technology Development (NAFOSTED).}
\end{center}
\begin{center}
                {{\sc Marcel
Morales}\\
{\small Universit\'e de Grenoble I, Institut Fourier,
UMR 5582, B.P.74,\\
38402 Saint-Martin D'H\`eres Cedex,\\
and ESPE, Universit\'e Lyon 1, 5 rue Anselme,\\ 69317 Lyon Cedex (FRANCE)}\\
 E-mail address: morales@ujf-grenoble.fr\\
 }
 {{\sc Pham Hung Quy }\\
{\small Department of Mathematics, FPT University, \\
8 Ton That Thuyet, Hanoi, Vietnam\\ }
E-mail address: quyph@fpt.edu.vn
}
\end{center}
\begin{abstract}
Let $(R, \frak m)$ be a Noetherian local ring and $M$ a finitely generated $R$-module of dimension $d$. Let $\underline{x} = x_1, ..., x_d$ be a
system of parameters of $M$ and $\underline{n} = (n_1, ..., n_d)$ a $d$-tuple of positive integers. In this paper we study the length of generalized
 fractions $M (1/(x_1, ..., x_d, 1))$ which was introduced  by Sharp and Hamieh in \cite{ShH85}.
First, we study the growth of the function
$J_{\underline{x}, M}(\underline{n}) = \ell(M (1/(x_1^{n_1}, ..., x_d^{n_d}, 1))) - n_1...n_d e(\underline{x};M)$.
Then we give an explicit calculation for the function $J_{\underline{x}, M}(\underline{n})$ in the case where $M$
admits a Macaulayfication. Most previous results on this topic are now easy to understand and to improve.
\end{abstract}
\section{Introduction}
Throughout this paper, let $(R, \frak m)$ be a Noetherian local ring and $M$ a finitely generated $R$-module of dimension $d$.
Let $\underline{x} = x_1, ..., x_d$ be a system of parameters of $M$.
In this paper we study the length of generalized
 fractions $M (1/(x_1, ..., x_d, 1))$ which was introduced  by Sharp and Hamieh in \cite{ShH85}.
 It has been proved in \cite[Lemma 2.3]{CK99} that $M/((\underline{x})_M^{\lim})$ is isomorphic to
$M(1/(x_1, ..., x_d, 1))$, where $$(\underline{x})_M^{\lim} = \bigcup_{n>0}\big((x_1^{n+1}, ..., x_d^{n+1})M: (x_1...x_d)^n \big).$$  We call
$(\underline{x})_M^{\lim} $ the {\it limit closure}. If $M = R$ we write $(\underline{x})^{\lim}$.

It should be noted that the Hochster monomial conjecture is equivalent to the claim $(\underline{x})^{\lim} \neq R$ for all system of parameters
 $\underline{x}$.

Let $\underline{n} = (n_1,...,n_d)$ be a $d$-tuple
of positive integers and $\underline{x}^{\underline{n}} =
x_1^{n_1},...,x_d^{n_d}$. We consider the functions  in $\underline{n}$,
$$I_{\underline{x}, M}(\underline{n}) = \ell(M/(\underline{x}^{\underline{n}})M) -
e(\underline{x}^{\underline{n}};M),$$
$$J_{\underline{x}, M}(\underline{n}) = e(\underline{x}^{\underline{n}},M) -
\ell(M/(\underline{x}^{\underline{n}})_M^{\lim}),$$
where $e(\underline{x};M)$ is the Serre multiplicity of $M$ with
respect to the sequence $\underline{x}$. In several papers N.T. Cuong et als, showed that the least degree of all polynomials in
$\underline{n}$ bounding above $I_{\underline{x}, M}(\underline{n})$
is independent of the choice of $\underline{x}$.
It is called {\it the
polynomial type } of $M$, and denoted by $p(M)$.
The behavior of the function  $ J_{\underline{x}, M}(\underline{n})$ was studied in \cite{MN03} and \cite{CMN03}.
In general $J_{\underline{x}, M}(\underline{n})$ is not a polynomial in $\underline{n}$.
Furthermore, the least degree of polynomials bounding above $J_{\underline{x}, M}(\underline{n})$ is independent of the choice of $\underline{x}$.
(see \cite[Theorem 4.4]{CHL99}).
It is called {\it the
polynomial type of generalized fractions} of $M$, and denoted by $pf(M)$.

These two functions are closely related. In general it was proved in \cite[Theorem 4.5]{M95} that
$pf(M) \le p(M)$. Our first result proves that
 if $M$ is unmixed and $\underline{x}$ is a certain system of parameters, then
 $ I_{\underline{x}, M}(\underline{n})\leq 2^{d-2}J_{\underline{x}, M}(\underline{n}), $ which implies that $pf(M)=p(M).$

 Our second result consist to study the function $J_{\underline{x}, M}(\underline{n})$ in the case where $M$ admits a Macaulayfication and we can express
$J_{\underline{x}, M}(\underline{n})$ in terms
of the Non Cohen-Macaulay locus of $M$.  As an application, in characteristic $p>0$, we establish a
connection between $J_{\underline{x}, M}(\underline{n})$
 and the Hilbert-Kunz function, and prove by using a recent result of Brenner \cite{B14}, the existence of a local ring and a system of parameters such that the function
 $J_{\underline{x}, M}(\underline{n})$, with $n=n_1=...=n_d$, cant be defined by a finite set of polynomials.

\section{Preliminaries}
First we recall the notion of {\it polynomial type} of a module.
Let $(R, \frak m)$ be a Noetherian local ring, $M$ a finitely
generated $R$-module of dimension $d$, $\underline{x} =
x_1,...,x_d$ a system of parameters of $M$, and $\underline{n} = (n_1,...,n_d)$ a $d$-tuple
of positive integers. We set $\underline{x}^{\underline{n}} =
x_1^{n_1},...,x_d^{n_d}$ and we consider the function in $\underline{n}$
$$I_{\underline{x}, M}(\underline{n}) = \ell(M/(\underline{x}^{\underline{n}})M) -
e(\underline{x}^{\underline{n}};M),$$
where $e(\underline{x};M)$ is the Serre multiplicity of $M$ with
respect to the sequence $\underline{x}$. N.T. Cuong in \cite[Theorem
2.3]{C92} showed that the least degree of all polynomials in
$\underline{n}$ bounding above $I_{\underline{x}, M}(\underline{n})$
is independent of the choice of $\underline{x}$.
\begin{definition}\rm
The least degree of all polynomials in $\underline{n}$ bounding
above $I_{\underline{x}, M}(\underline{n})$ is called {\it the
polynomial type} of $M$, and is denoted by $p(M)$.
\end{definition}
The following basic properties of $p(M)$ can be found in \cite{C92}.
\begin{remark}\rm
\begin{enumerate}[{(i)}]
\item We have $p(M) = p(\widehat{M})  \leq d-1$, where $\widehat{M}$ is the $\frak m$-adic completion of $M$.
\item  An $R$-module $M$ is Cohen-Macaulay if and only if $p(M) = -\infty$.
Moreover, $M$ is generalized Cohen-Macaulay if and only if $p(M) \le
0$.
\end{enumerate}
\end{remark}
Let $\frak a_i(M) = \mathrm{Ann}H^i_{\frak m}(M)$ for $0 \leq i \leq d-1$
and $\frak a(M) = \frak a_0(M) \cdots \frak a_{d-1}(M)$. We denote
by $NC(M)$ the non-Cohen-Macaulay locus of $M$ i.e. $NC(M) = \{\frak
p \in \mathrm{supp}(M)\,|\, M_{\frak p} \,\, \text{is not
Cohen-Macaulay} \}$. Recall that $M$ is called {\it
equidimensional} if $\dim M = \dim R/\frak p$ for all minimal
associated primes of $M$. The polynomial type of a module can be well understood by the annihilator of local cohomology as follows.
\begin{proposition}[\cite{C91}, Theorem 1.2] \label{P2.3} Suppose that $R$ admits a dualizing complex. Then
\begin{enumerate}[{(i)}]\rm
\item $p(M) = \dim R/\frak a(M)$.
\item {\it If $M$ is equidimensional then $p(M) = \dim (NC(M))$.}
\end{enumerate}
\end{proposition}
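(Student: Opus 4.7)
My plan is to prove the two statements separately, with part (i) providing the main technical work and part (ii) following by local duality.

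For the upper bound $p(M) \le \dim R/\mathfrak{a}(M)$ in part (i), I would set $s = \dim R/\mathfrak{a}(M)$ and proceed by induction on $d$, carefully choosing the parameter system. The idea is to exploit the fact that any $x_i \in \mathfrak{a}(M)$ annihilates every local cohomology module $H^j_{\mathfrak{m}}(M)$. Concretely, I would pick $x_1,\dots,x_{d-s}$ inside $\mathfrak{a}(M)$ (this is possible because $\mathfrak{a}(M)$ has height $d-s$ in $\mathrm{supp}(M)$) and leave $x_{d-s+1},\dots,x_d$ arbitrary. The short exact sequence $0 \to M/(0:_M x_1^{n_1}) \xrightarrow{x_1^{n_1}} M \to M/x_1^{n_1}M \to 0$, combined with the fact that $\ell(0:_M x_1^{n_1})$ is bounded independently of $n_1$ because $x_1$ annihilates all $H^i_{\mathfrak{m}}(M)$, reduces the estimate of $I_{\underline{x}, M}(\underline{n})$ to one for $M/x_1^{n_1}M$, whose dimension drops by one. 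Iterating this for $x_1,\dots,x_{d-s}$ collapses the problem to a module of dimension $s$, where the trivial bound $I \le \ell(M/(\underline{x}^{\underline{n}})M)$ together with the Hilbert--Samuel estimate yields a polynomial bound of degree $\le s$.

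For the matching lower bound $p(M) \ge \dim R/\mathfrak{a}(M)$, I would argue by localization: pick a prime $\mathfrak{p} \supseteq \mathfrak{a}(M)$ with $\dim R/\mathfrak{p} = s$, so some $\mathfrak{a}_i(M) \subseteq \mathfrak{p}$ and hence $H^i_{\mathfrak{m}}(M)_{\mathfrak{p}} \neq 0$. Choose a system of parameters $\underline{x}$ whose first $s$ elements form part of a system of parameters of $R/\mathfrak{p}$, so that modulo $(x_1,\dots,x_{d-s})M$ a residue behaves like an element of a non-Cohen--Macaulay localization. A direct computation of $\ell(M/(\underline{x}^{\underline{n}})M) - e(\underline{x}^{\underline{n}};M)$ via the standard "Koszul versus Hilbert--Samuel" comparison then produces a term of polynomial degree exactly $s$ in $n_1,\dots,n_s$, witnessing $p(M) \ge s$.

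For part (ii), with $M$ equidimensional and $R$ admitting a dualizing complex, I would establish $V(\mathfrak{a}(M)) = NC(M)$, whence (i) gives the conclusion. The inclusion $NC(M) \subseteq V(\mathfrak{a}(M))$ is immediate: if $\mathfrak{p} \not\supseteq \mathfrak{a}(M)$ then each $H^i_{\mathfrak{m}}(M)_\mathfrak{p} = 0$ for $i<d$, and local duality (applied via the dualizing complex) transfers this vanishing into the vanishing of $H^j_{\mathfrak{p}R_\mathfrak{p}}(M_\mathfrak{p})$ for $j<\dim M_\mathfrak{p}$, so $M_\mathfrak{p}$ is Cohen--Macaulay. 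The reverse inclusion uses equidimensionality: if $\mathfrak{p}\supseteq \mathfrak{a}_i(M)$ for some $i<d$ then $H^i_{\mathfrak{m}}(M)_\mathfrak{p}\neq 0$, and since $\dim M - \dim R/\mathfrak{p} = \dim M_\mathfrak{p}$ by equidimensionality (and catenarity, which comes free from the dualizing complex), local duality translates this into $H^{i-\dim R/\mathfrak{p}}_{\mathfrak{p}R_\mathfrak{p}}(M_\mathfrak{p})\neq 0$ with $i-\dim R/\mathfrak{p} < \dim M_\mathfrak{p}$, so $M_\mathfrak{p}$ fails to be Cohen--Macaulay.

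The main obstacle I expect is the lower bound in part (i): producing a system of parameters whose associated function $I_{\underline{x},M}(\underline{n})$ realizes the full degree $s$, rather than just bounding it from above. This requires a delicate prime-avoidance choice so that the leading term from the localized non-vanishing local cohomology is not accidentally cancelled, and it is here that the hypothesis on the dualizing complex (ensuring catenarity and the usual local duality formalism) plays its most essential role.
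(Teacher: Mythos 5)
This proposition is quoted in the paper from \cite{C91} without proof, so there is nothing internal to compare against; judged on its own, your sketch has two concrete errors and one unproved core step. The most serious conceptual slip is the localization of local cohomology: $H^i_{\mathfrak m}(M)$ is $\mathfrak m$-torsion (artinian), so $H^i_{\mathfrak m}(M)_{\mathfrak p}=0$ for \emph{every} $\mathfrak p\neq\mathfrak m$, and since these modules are not finitely generated one cannot identify $V(\mathrm{Ann}\,H^i_{\mathfrak m}(M))$ with a support. Hence the step ``$\mathfrak a_i(M)\subseteq\mathfrak p$, hence $H^i_{\mathfrak m}(M)_{\mathfrak p}\neq0$'' and its converse, on which both your lower bound in (i) and both inclusions in (ii) rest, are vacuous. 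The argument must be routed through the finitely generated deficiency modules $K^i(M)=\mathrm{Ext}^{\dim R-i}_R(M,D^\bullet)$ supplied by the dualizing complex, for which one does have $V(\mathfrak a_i(M))=\mathrm{Supp}\,K^i(M)$ and $K^i(M)_{\mathfrak p}\cong K^{i-\dim R/\mathfrak p}(M_{\mathfrak p})$ under the equidimensionality/catenarity hypotheses; this is precisely where the dualizing complex enters, and it is the route taken in \cite{C91}.

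For the upper bound in (i), the claim that $\ell(0:_Mx_1^{n_1})$ is finite and bounded because $x_1$ kills all $H^i_{\mathfrak m}(M)$ is false: $0:_Mx_1^{n}$ sits inside $H^0_{(x_1)}(M)$, not inside $H^0_{\mathfrak m}(M)$, and can have dimension $d-1$. For instance, with $M=k[[x,y,z]]/(xy,xz)$ one has $\mathfrak a(M)=(y,z)$, $\dim R/\mathfrak a(M)=1$, and the parameter element $x_1=y\in\mathfrak a(M)$ satisfies $0:_My\supseteq xM\cong k[[x]]$, of infinite length. The correct reduction (as in \cite{C92}, and reflected in the paper's Definition of a $p$-standard system of parameters) peels parameters off the \emph{other} end, $x_d\in\mathfrak a(M)$ and $x_i\in\mathfrak a(M/(x_{i+1},\dots,x_d)M)$, and controls the error via the associativity formula $e(\underline{x}^{\underline n};M)=e(x_1^{n_1},\dots,x_{d-1}^{n_{d-1}};M/x_d^{n_d}M)-e(x_1^{n_1},\dots,x_{d-1}^{n_{d-1}};0:_Mx_d^{n_d})$ together with Schenzel-type inclusions comparing $\mathfrak a(M)$ with $\mathfrak a(M/x_dM)$; none of this is automatic from your setup. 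Finally, the lower bound $p(M)\ge\dim R/\mathfrak a(M)$, which you yourself identify as the main obstacle, is only asserted (``a direct computation \dots produces a term of degree exactly $s$''); since this is the substantive half of (i), the proof is incomplete even after the two errors above are repaired.
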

Although the function $I_{\underline{x}, M}(\underline{n})$ is not a polynomial in general, it has a good behavior for some special systems of parameters.
\begin{definition}[\cite{C95}] \rm  A system of parameters $x_1,...,x_d$ of $M$ is called {\it $p$-standard} if $x_d \in \frak
a(M)$ and $x_i \in \frak a(M/(x_{i+1},...,x_d)M)$ for all $i =
d-1,...,1$.
\end{definition}
\begin{definition}[\cite{Hu82}, \cite{GY86}]\rm
\begin{enumerate}[{(i)}]
\item A sequence in $R$, $\underline{x} = x_1,...,x_s$ is called a {\it
$d$-sequence} of $M$ if $(x_1,...,x_{i-1})M:x_j =
(x_1,...,x_{i-1})M:x_ix_j$ for all $i \leq j \leq s$.
\item A sequence $\underline{x} = x_1,...,x_s$ is called a {\it
strong $d$-sequence} if $\underline{x}^{\underline{n}} =
x_1^{n_1},...,x_s^{n_s}$ is a $d$-sequence for all $\underline{n} =
(n_1,...,n_s) \in \mathbb{N}^s$.
\end{enumerate}
\end{definition}

For important properties of $d$-sequence, see
\cite{Hu82} and \cite{Tr83}.

\begin{definition}[\cite{CC07-1}]\rm
 A sequence of elements $\underline{x} = x_1,...,x_s$ is called a {\it
$dd$-sequence} of $M$ if $\underline{x}$ is a strong $d$-sequence of
$M$ and the following conditions are satisfied:
\begin{enumerate}[{(i)}]
\item $s=1$ or,
\item $s>1$ and $\underline{x}' = x_1,...,x_{s-1}$ is a $dd$-sequence of $M/x_s^n$ for all $n \geq 1$.
\end{enumerate}
\end{definition}
The function $I_{\underline{x}, M}(\underline{n})$ is a polynomial for a $p$-standard system of parameters or $dd$-sequence of parameters (see \cite[Theorem 2.6 (ii)]{C95} and \cite[Theorem 1.2]{CC07-1}).
\begin{proposition}\label{P2.7}
A system of parameters $\underline{x} = x_1,...,x_d$ of $M$ is a $dd$-sequence iff for all $n_1,...,n_d>0$ we have
$$I_{\underline{x}, M}(\underline{n}) =
\sum_{i=0}^{p(M)} n_1...n_i e_i,$$ where
$e_i = e(x_1,...,x_i; 0:_{M/(x_{i+2},...,x_d)M}x_{i+1})$ and $e_0 =
\ell(0:_{M/(x_{2},...,x_d)M}x_{1})$. Moreover a $p$-standard system of parameters is a $dd$-sequence system of parameters.
\end{proposition}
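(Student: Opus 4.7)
\noindent\emph{Proof plan.} My plan is to prove the equivalence by induction on $d = \dim M$ and to handle the ``Moreover'' assertion by a separate induction. The central computational tool will be the short exact sequence
$$0 \to (0:_M x_d^{n_d}) \to M \xrightarrow{x_d^{n_d}} M \to M/x_d^{n_d}M \to 0,$$
from which one extracts the length identity $\ell(M/(\underline{x}^{\underline{n}})M) = \ell(\overline{M}/(\underline{x}'^{\underline{n}'})\overline{M})$ with $\overline{M} := M/x_d^{n_d}M$ and $\underline{x}' = x_1,\ldots,x_{d-1}$, together with the multiplicity identity
$$e(\underline{x}^{\underline{n}};M) = e(\underline{x}'^{\underline{n}'};\overline{M}) - e(\underline{x}'^{\underline{n}'}; 0:_M x_d^{n_d}).$$
Subtracting gives the fundamental recursion $I_{\underline{x},M}(\underline{n}) = I_{\underline{x}',\overline{M}}(\underline{n}') + e(\underline{x}'^{\underline{n}'}; 0:_M x_d^{n_d})$, on which every step below rests.

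For the base case $d=1$, I would show that $x_1$ being a $dd$-sequence is equivalent to the stabilization $(0:_M x_1^{n_1}) = (0:_M x_1)$ for every $n_1 \geq 1$. Under that hypothesis the Hilbert--Samuel length collapses to $\ell(M/x_1^{n_1}M) = n_1 e(x_1;M) + \ell(0:_M x_1)$, so $I_{x_1,M}(n_1) = e_0$, and conversely the asymptotic form of the Hilbert--Samuel function forces the stabilization as soon as $I_{x_1,M}$ is the constant $\ell(0:_M x_1)$. For the forward inductive step, the recursive clause in the $dd$-sequence definition delivers that $\underline{x}'$ is a $dd$-sequence of $\overline{M}$ for every $n_d$, so the induction hypothesis produces the polynomial shape of $I_{\underline{x}',\overline{M}}(\underline{n}')$. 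The step I expect to be the main obstacle is to verify that the remaining summand $e(\underline{x}'^{\underline{n}'}; 0:_M x_d^{n_d})$ is independent of $n_d$ and equals $n_1\cdots n_{d-1}e_{d-1}$. Here the strong $d$-sequence property on $\underline{x}$ supplies the crucial stabilization $(0:_M x_d^{n_d}) = (0:_M x_d)$, after which multiplicativity of the Serre multiplicity in powers reduces the summand to $n_1\cdots n_{d-1}\,e(x_1,\ldots,x_{d-1}; 0:_M x_d)$, matching $n_1\cdots n_{d-1}e_{d-1}$.

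For the converse implication I would read the assumed formula for $I_{\underline{x},M}(\underline{n})$ as a polynomial in $n_d$ with the other variables fixed: this forces $e(\underline{x}'^{\underline{n}'}; 0:_M x_d^{n_d})$ to be constant in $n_d$, from which one recovers $(0:_M x_d^{n_d}) = (0:_M x_d)$ and hence the strong $d$-sequence condition at the last position. Specializing to truncations and invoking the inductive hypothesis in reverse then produces the recursive $dd$-sequence condition for $\underline{x}'$ on every $\overline{M}$.

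For the ``Moreover'' assertion I would argue once more by induction on $d$. The hypothesis $x_d \in \frak{a}(M)$ means $x_d$ annihilates $H^i_{\frak m}(M)$ for every $i<d$, which forces $(0:_M x_d^n)$ to be stationary and gives the strong $d$-sequence property at position $d$. The chain condition $x_i \in \frak{a}(M/(x_{i+1},\ldots,x_d)M)$ carries over to each quotient $M/x_d^n M$, so $\underline{x}' = x_1,\ldots,x_{d-1}$ remains $p$-standard on $M/x_d^n M$ and is a $dd$-sequence there by the induction hypothesis, closing the recursive clause in the definition of $dd$-sequence.
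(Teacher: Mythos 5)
The paper does not prove Proposition~\ref{P2.7}; it is quoted from \cite[Theorem 2.6]{C95} and \cite[Theorem 1.2]{CC07-1}, so your argument has to stand on its own. Your engine --- the additivity formula $e(\underline{x}^{\underline{n}};M)=e(\underline{x}'^{\underline{n}'};M/x_d^{n_d}M)-e(\underline{x}'^{\underline{n}'};0:_Mx_d^{n_d})$ and the resulting recursion for $I$ --- is indeed the right one, and the base case $d=1$ is fine. But in the forward direction there is a gap you do not acknowledge: the induction hypothesis applied to $\overline{M}=M/x_d^{n_d}M$ yields coefficients $e(x_1,\ldots,x_i;0:_{M/(x_{i+2},\ldots,x_{d-1},x_d^{n_d})M}x_{i+1})$, which a priori depend on $n_d$, whereas the stated $e_i$ are computed with $x_d$ to the first power. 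You isolate the top summand as ``the main obstacle,'' but proving that all the lower coefficients are independent of $n_d$ is at least as delicate and requires the full strong $d$-sequence colon identities; your sketch is silent on it. Even for the top summand, the stabilization $0:_Mx_d^{n_d}=0:_Mx_d$ does not follow immediately from the paper's (order-dependent) definition of strong $d$-sequence, whose colon conditions chiefly constrain initial segments, so this too needs an argument. Finally, your induction naturally produces a sum up to $d-1$; cutting it off at $p(M)$ requires the observation that a polynomial identity for $I$ forces its degree to equal $p(M)$ by Cuong's theorem.

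The converse contains a step that actually fails. From the fact that $\sum_{i=0}^{p(M)}n_1\cdots n_ie_i$ does not involve $n_d$ you cannot conclude that $e(\underline{x}'^{\underline{n}'};0:_Mx_d^{n_d})$ is constant in $n_d$, because the other summand $I_{\underline{x}',M/x_d^{n_d}M}(\underline{n}')$ in the recursion also varies with $n_d$; and even granting that constancy, the inclusion $0:_Mx_d\subseteq 0:_Mx_d^{n_d}$ together with equality of $(d-1)$-dimensional multiplicities does \emph{not} force equality of the submodules --- the quotient could simply have dimension $<d-1$, which the multiplicity cannot see (contrast this with your $d=1$ case, where the modules have finite length). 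Recovering the entire array of conditions $(x_1^{n_1},\ldots,x_{i-1}^{n_{i-1}})M:x_i^{n_i}x_j^{n_j}=(x_1^{n_1},\ldots,x_{i-1}^{n_{i-1}})M:x_j^{n_j}$ for all $i\le j$ from one numerical identity is the real content of the converse, and the sketch does not engage with it. In the ``Moreover'' part, transferring $p$-standardness to $M/x_d^nM$ needs the nontrivial inclusion $\frak a(M/(x_{i+1},\ldots,x_d)M)\subseteq\frak a(M/(x_{i+1},\ldots,x_{d-1},x_d^n)M)$, a key lemma of \cite{C95} rather than something that ``carries over,'' and the strong $d$-sequence property of the whole sequence (not just at position $d$) is left unproved.
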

In order to introduce the notion of {\it polynomial type of generalized fractions} we recall the notion of {\it limit closure} of a parameter ideal.
\begin{definition} \rm Let $\underline{x} = x_1, ..., x_d$ be a system of parameters of $M$. Then the {\it limit closure} of $\underline{x}$ in $M$ is a submodule of $M$ defined by
$$(\underline{x})_M^{\lim} = \bigcup_{n>0}\big((x_1^{n+1}, ..., x_d^{n+1})M: (x_1...x_d)^n \big),$$
when $M = R$ we write $(\underline{x})^{\lim}$ for short.
\end{definition}
For a study of limit closure we refer to \cite{CQ15}.
\begin{remark}\label{R2.9}
  \begin{enumerate}[{(i)}]\rm
\item It is well known that $(\underline{x})M = (\underline{x})_M^{\lim}$ if and only if $\underline{x}$ is an $M$-sequence i.e. $M$ is Cohen-Macaulay.
\item The quotient $(\underline{x})_M^{\lim}/(\underline{x})M$ is the kernel of the canonical map
$$H^d(\underline{x};M) \to H^d_{\frak m}(M).$$
\item (see \cite[Lemma 2.4]{CC15}) If $x_1, ..., x_d$ is a $dd$-sequence we have
$$(\underline{x})_M^{\lim} = \sum_{i=1}^d\big[ (x_1, ..., \widehat{x_i}, ..., x_d)M :_Mx_i  \big]+ (\underline{x})M.$$
\end{enumerate}
\end{remark}
Similarly to the notion of polynomial type, we consider the function in $\underline{n}$
$$J_{\underline{x}, M}(\underline{n}) = e(\underline{x}^{\underline{n}},M) -
\ell(M/(\underline{x}^{\underline{n}})_M^{\lim}).$$
In general $J_{\underline{x}, M}(\underline{n})$ is not a polynomial in $\underline{n}$ (cf. \cite{CMN03}) but it is bounded by polynomials. Furthermore, the least degree of polynomials bounding above $J_{\underline{x}, M}(\underline{n})$ is independent of the choice of $\underline{x}$ (see \cite[Theorem 4.4]{CHL99}).
\begin{definition}\rm
The least degree of all polynomials in $\underline{n}$ bounding
above $J_{\underline{x}, M}(\underline{n})$ is called {\it the
polynomial type of generalized fractions} of $M$, and denoted by $pf(M)$.
\end{definition}
Now we recall the notion of {\it unmixed component} of $M$ which is closely related with the limit closure and the polynomial type of generalized fractions.
\begin{definition} \rm The largest submodule of $M$ of dimension less than $d$ is called {\it the unmixed component} of $M$ and it is denoted by $U_M(0)$.
\end{definition}
It should be noted that if $\cap_{\frak p \in \mathrm{Ass}M}N(\frak p) = 0_M$ is a reduced primary decomposition of the zero submodule of $M$, then
$U_M(0) = \cap_{\frak p \in \mathrm{Assh}M}N(\frak p)$, where $\mathrm{Assh}M = \{\frak p \in \mathrm{Ass}M | \dim R/\frak p = \dim M\}$.
\begin{remark} \label{R2.12} \rm
   \begin{enumerate}[{(i)}]\rm
\item In \cite[Theorem 4.1]{CQ15} it is proved that $U_M(0) = \cap_{n}(\underline{x}^{[n]})^{\lim}_M$ for any system of parameters $\underline{x}$ of $M$, where we denote $\underline{x}^{[n]} = x_1^n, ..., x_d^n$.
\item (cf. \cite[Theorem 3.1]{CN03}) Suppose that $R$ admits a dualizing complex then $pf(M) = -\infty$ (resp. $pf(M) \le 0$) if and only if $M/U_M(0)$ is Cohen-Macaulay (resp. generalized Cohen-Macaulay).
\end{enumerate}
\end{remark}
Recently, N.T. Cuong and the second author study the splitting of local cohomology (cf. \cite{CQ11}, \cite{CQ16}), this will provide the main tool for the proof of our first result in this paper. We collect here some results which we need in the sequel. Set
$$\mathfrak{b}(M) = \bigcap_{\underline{x};i=1}^d
\mathrm{Ann}(0:x_i)_{M/(x_1,...,x_{i-1})M},$$
where $\underline{x} = x_1, ..., x_d$ runs over all systems of parameters of $M$. By \cite[Satz 2.4.5]{Sch82} we have
$$\mathfrak{a}(M) \subseteq \mathfrak{b}(M) \subseteq \mathfrak{a}_0(M) \cap \cdots \cap \mathfrak{a}_{d-1}(M).$$

We have the following splitting property.
\begin{theorem}[\cite{CQ16}, Corollary 3.5] \label{T2.13}
  Let $x \in \mathfrak{b}(M)^3$ be a parameter element of $M$. Let $U_M(0)$ be the unmixed component of $M$ and set $\overline{M} = M/U_M(0)$.
 Then
$$H^i_{\mathfrak{m}}(M/xM) \cong H^i_{\mathfrak{m}}(M) \oplus H^{i+1}_{\mathfrak{m}}(\overline{M})$$
for all $i<d-1$.
\end{theorem}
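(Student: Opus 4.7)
The plan is to pass from $M$ to its unmixed quotient $\overline{M} = M/U$, where $U := U_M(0)$. Two key facts set up the argument: first, $\overline{M}$ is unmixed of dimension $d$, so every associated prime has dimension $d$; as $x$ is a parameter of $M$ it avoids all such primes, hence is $\overline{M}$-regular. Second, $\mathfrak{b}(M)$ annihilates $U$: by prime avoidance one can select a system of parameters $y_1, \ldots, y_d$ of $M$ with $y_1 \in \mathrm{Ann}(U)$, so that $U \subseteq 0:_M y_1$ and therefore $\mathfrak{b}(M) \subseteq \mathrm{Ann}(0:_M y_1) \subseteq \mathrm{Ann}(U)$. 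Combined with the inclusion $\mathfrak{b}(M) \subseteq \mathfrak{a}_0(M) \cap \cdots \cap \mathfrak{a}_{d-1}(M)$ from the excerpt, this yields $x \cdot U = 0$ and $x \cdot H^i_{\mathfrak{m}}(M) = 0$ for $i<d$. Transferring the latter along the long exact sequence of $0 \to U \to M \to \overline{M} \to 0$ (the local cohomology of $U$ also being killed by $x$) shows that $x \cdot H^i_{\mathfrak{m}}(\overline{M}) = 0$ for $i<d$ too.

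Applying the snake lemma to multiplication by $x$ on $0 \to U \to M \to \overline{M} \to 0$, using $xU = 0$ and $0:_{\overline{M}} x = 0$, produces
$$0 \to U \to M/xM \to \overline{M}/x\overline{M} \to 0.$$
The $x$-regularity of $\overline{M}$ together with $x \cdot H^i_{\mathfrak{m}}(\overline{M}) = 0$ gives, for $i<d-1$,
$$0 \to H^i_{\mathfrak{m}}(\overline{M}) \to H^i_{\mathfrak{m}}(\overline{M}/x\overline{M}) \to H^{i+1}_{\mathfrak{m}}(\overline{M}) \to 0,$$
and an analogous analysis of the long exact sequences attached to $0 \to U \to M \to \overline{M} \to 0$ and $0 \to U \to M/xM \to \overline{M}/x\overline{M} \to 0$ (the connecting maps being killed by repeated use of the annihilator information) provides
$$0 \to H^i_{\mathfrak{m}}(U) \to H^i_{\mathfrak{m}}(M) \to H^i_{\mathfrak{m}}(\overline{M}) \to 0, \qquad 0 \to H^i_{\mathfrak{m}}(U) \to H^i_{\mathfrak{m}}(M/xM) \to H^i_{\mathfrak{m}}(\overline{M}/x\overline{M}) \to 0.$$

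Granting that these three extensions split canonically, reassembling the pieces yields
$$H^i_{\mathfrak{m}}(M/xM) \cong H^i_{\mathfrak{m}}(U) \oplus H^i_{\mathfrak{m}}(\overline{M}) \oplus H^{i+1}_{\mathfrak{m}}(\overline{M}) \cong H^i_{\mathfrak{m}}(M) \oplus H^{i+1}_{\mathfrak{m}}(\overline{M})$$
for all $i<d-1$, as required.

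The main obstacle is precisely the splitting of these three short exact sequences; from the long exact sequences alone one only gets extensions. This is exactly why one requires the stronger hypothesis $x \in \mathfrak{b}(M)^3$ rather than merely $x \in \mathfrak{b}(M)$. Heuristically, the three factors of $x$ play three distinct roles: annihilating $H^i_{\mathfrak{m}}(M)$, annihilating $U$, and producing an explicit retraction (typically constructed via multiplication by a suitable element at the level of the \v{C}ech complex computing local cohomology) that cleaves each extension. Constructing these splittings is the genuinely delicate step, and it is this content that is packaged into the cited \cite[Corollary 3.5]{CQ16}.
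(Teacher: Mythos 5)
First, note that the paper itself gives no proof of Theorem \ref{T2.13}: it is quoted verbatim from \cite[Corollary 3.5]{CQ16}, so there is no internal argument to compare yours against. Judged on its own terms, your proposal sets up the right objects and gets several preliminary facts correct: $x$ is $\overline{M}$-regular; $\mathfrak{b}(M)\subseteq\mathrm{Ann}(U_M(0))$ (your prime-avoidance argument is fine); $x$ kills $H^i_{\mathfrak{m}}(M)$ for $i<d$; and the snake lemma does give $0\to U_M(0)\to M/xM\to\overline{M}/x\overline{M}\to 0$. (A small slip: chasing the long exact sequence of $0\to U_M(0)\to M\to\overline{M}\to 0$ only gives $x^2H^i_{\mathfrak{m}}(\overline{M})=0$ directly; to get annihilation by $x$ itself you should instead use $x\in\mathfrak{b}(M)^3\subseteq\mathfrak{b}(M)^2$ and the fact that each product of two elements of $\mathfrak{b}(M)$ kills $H^i_{\mathfrak{m}}(\overline{M})$.)

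The genuine gap is that the entire content of the theorem is deferred. You need (a) the connecting homomorphisms in the long exact sequences of $0\to U_M(0)\to M\to\overline{M}\to 0$ and $0\to U_M(0)\to M/xM\to\overline{M}/x\overline{M}\to 0$ to vanish, and (b) all three resulting extensions to split. For (a), the observation that source and target of a connecting map are both annihilated by $x$ (or by $\mathfrak{b}(M)$) does not make the map zero, so even the short exactness of two of your three sequences is unproved. For (b), you explicitly write ``granting that these three extensions split canonically'' and then remark that this content ``is packaged into the cited Corollary 3.5'' --- but the statement to be proved \emph{is} that corollary, so this is circular: you have reduced the theorem to three splitting statements that are at least as hard as the original one, without exhibiting any retraction. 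A more promising route, and closer to the actual argument in \cite{CQ16}, starts from the identity $0:_Mx=U_M(0)$ (which holds because $\dim(0:_Mx)<d$ forces $0:_Mx\subseteq U_M(0)$, while $xU_M(0)=0$ gives the reverse inclusion); this yields the single exact sequence $0\to\overline{M}\xrightarrow{\ x\ }M\to M/xM\to 0$, whose long exact sequence --- once one shows that the maps $H^i_{\mathfrak{m}}(\overline{M})\to H^i_{\mathfrak{m}}(M)$ induced by $x$ vanish for $i\le d-1$ --- produces exactly one extension $0\to H^i_{\mathfrak{m}}(M)\to H^i_{\mathfrak{m}}(M/xM)\to H^{i+1}_{\mathfrak{m}}(\overline{M})\to 0$ to be split, and it is in splitting this that the reserve of powers in $\mathfrak{b}(M)^3$ is actually spent. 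As it stands, your text is a correct reduction plus an accurate description of what remains to be done, not a proof.
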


\begin{lemma}\label{L2.14}
  Let $N \subseteq H^0_{\frak m}(M)$ be a submodule of finite length. Then $\frak b(M) \subseteq \frak b(M/N)$.
\end{lemma}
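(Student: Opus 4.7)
The plan is to check $\mathfrak{b}(M) \subseteq \mathfrak{b}(M/N)$ element-wise, directly from the definition. Fix $a \in \mathfrak{b}(M)$ and an arbitrary system of parameters $\underline{y} = y_1,\ldots,y_d$ of $M/N$; it suffices to prove, for every index $i$, that $a$ annihilates $(0:_{\bar M_i} y_i)$, where $\bar M_i = (M/N)/(y_1,\ldots,y_{i-1})(M/N)$. Taking the intersection over all such $\underline{y}$ and $i$ will yield the conclusion.

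A preliminary observation is that every system of parameters of $M/N$ is automatically a system of parameters of $M$. Since $N$ has finite length, it is $\mathfrak{m}$-torsion and of dimension $0$, so the surjection $M/(\underline{y})M \twoheadrightarrow (M/N)/(\underline{y})(M/N)$ has finite-length kernel, forcing $\dim M/(\underline{y})M = 0$. This is what lets me apply the hypothesis $a \in \mathfrak{b}(M)$ to systems of parameters originating from $M/N$.

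For the main step, write $M_i = M/(y_1,\ldots,y_{i-1})M$ and let $N_i$ be the image of $N$ in $M_i$, so $\bar M_i = M_i/N_i$ with $\ell(N_i) < \infty$. Given $\bar m \in (0:_{\bar M_i} y_i)$, lift to $m \in M_i$; then $y_i m \in N_i$. The key trick is to absorb $N_i$ by a power of $y_i$: since $N_i$ is $\mathfrak{m}$-torsion, I can choose $s \geq 1$ with $y_i^s N_i = 0$, and then $y_i^{s+1} m = y_i^s(y_i m) \in y_i^s N_i = 0$ in $M_i$. The modified sequence $y_1,\ldots,y_{i-1},y_i^{s+1},y_{i+1},\ldots,y_d$ has the same radical as $\underline{y}$, hence remains a system of parameters of $M$, so the hypothesis $a \in \mathfrak{b}(M)$ applied to it gives $a \cdot (0:_{M_i} y_i^{s+1}) = 0$. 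In particular $am = 0$ in $M_i$, i.e.\ $a\bar m = 0$ in $\bar M_i$, as required. The only delicate point, and the one I would verify with most care, is this absorption trick -- the simultaneous choice of $s$ with $y_i^s N_i = 0$ and the observation that replacing $y_i$ by $y_i^{s+1}$ keeps the sequence a system of parameters of $M$; everything else is formal unwinding of the definition of $\mathfrak{b}$.
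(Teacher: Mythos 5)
Your proof is correct and follows essentially the same route as the paper's: both arguments absorb $N$ by replacing $y_i$ with a suitable power $y_i^{s+1}$ that kills the image of $N$ modulo $(y_1,\ldots,y_{i-1})M$, observe that the modified sequence is still a system of parameters of $M$, and then invoke the definition of $\mathfrak{b}(M)$. Your element-wise formulation merely sidesteps the paper's colon-stabilization step (the paper establishes an equality of colon modules at level $2n_0$ where, as you show, a containment suffices), but the underlying idea is identical.
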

\begin{proof} Let $x_1,...,x_d$ be an arbitrary system of parameters  of $M/N$. It is also a system of parameters of $M$. By definition of $\frak b(M/N)$, we need only to prove that
$$\frak{b}(M) \subseteq \mathrm{Ann}\frac{[(x_1,...,x_{i-1})M+N]:x_i}{(x_1,...,x_{i-1})M+N}$$
for all $i \le d$. Choose a positive integer $n_0$ such that $x_i^{n_0}N=0$ and for all $n\geq n_0$ we have
$$(x_1,...,x_{i-1})M:x_i^n=(x_1,...,x_{i-1})M:x_i^{n_0},$$
$$[(x_1,...,x_{i-1})M+N]:x_i^n=[(x_1,...,x_{i-1})M+N]:x_i^{n_0}.$$
So
$$[(x_1,...,x_{i-1})M+N]:x_i^{n_0} \subseteq  (x_1,...,x_{i-1})M:x_i^{2n_0} \subseteq [(x_1,...,x_{i-1})M+N]:x_i^{2n_0}.$$
Hence $(x_1,...,x_{i-1})M:x_i^{2n_0} = [(x_1,...,x_{i-1})M+N]:x_i^{2n_0}$ and we have
\begin{eqnarray*}
\mathrm{Ann}\frac{[(x_1,...,x_{i-1})M+N]:x_i}{(x_1,...,x_{i-1})M+N}
&\supseteq&
\mathrm{Ann}\frac{[(x_1,...,x_{i-1})M+N]:x_i^{2n_0}}{(x_1,...,x_{i-1})M+N}\\
&=&
\mathrm{Ann}\frac{(x_1,...,x_{i-1})M:x_i^{2n_0}}{(x_1,...,x_{i-1})M+N}\\
&\supseteq&
\mathrm{Ann}\frac{(x_1,...,x_{i-1})M:x_i^{2n_0}}{(x_1,...,x_{i-1})M}\\
&\supseteq& \frak{b}(M).
\end{eqnarray*}
\end{proof}

The following notion of system of parameters is closed related with $p$-standard and $dd$-sequence system of parameters and very useful in this paper.

\begin{definition}\rm A system of parameters $x_1, ..., x_d$ is called a $C$-system of parameters of $M$ if $x_d \in \frak b(M)^3$ and $x_i \in \frak b(M/(x_{i+1}, ..., x_d)M)^3$ for all $i = d-1, ..., 1$.
\end{definition}
We call $C$-system of parameters in honor of Professor N.T. Cuong. If $(R, \frak m)$ is the quotient of a Cohen-Macaulay ring then we always have that $\dim R/\frak a(M) < \dim M$ for every finitely generated $R$-module $M$. So every finitely generated $R$-module $M$ admits a $C$-system of parameters.

\begin{lemma}\label{L2.15} Let $x_1, ..., x_d$ be a $C$-system of parameters of $M$. Then
   \begin{enumerate}[{(i)}]\rm
\item $x_1, ..., x_d$ is a $dd$-sequence.
\item $x_1^{n_1}, ..., x_d^{n_d}$ is a $C$-system of parameters of $M$ for all $n_1, ...., n_d \ge 1$.
\item For all $i \le d$ we have $x_1, ..., x_{i-1}, x_{i+1}, ..., x_d$ is a $C$-system of parameters of $M/x_iM$.
\item Let $N \subseteq H^0_{\frak m}(M)$ be a submodule of finite length. Then $x_1, ..., x_d$ is a $C$-system of parameters of $M/N$.
\end{enumerate}
\end{lemma}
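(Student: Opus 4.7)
The plan is to establish the four parts in the order (iv), (iii), (ii), (i), with each step reducing to a statement about the behaviour of $\mathfrak{b}$ under quotients. Part (iv) is an immediate application of Lemma~\ref{L2.14}: the image of the finite-length submodule $N$ in each quotient $M/(x_{i+1},\dots,x_d)M$ is still a finite-length submodule of its $H^0_\mathfrak{m}$, so Lemma~\ref{L2.14} gives $\mathfrak{b}(M/(x_{i+1},\dots,x_d)M)\subseteq \mathfrak{b}((M/N)/(x_{i+1},\dots,x_d)(M/N))$, which transfers each defining inclusion of the $C$-system. Part (iii) follows from the key observation that \emph{if $x$ is a parameter element of $M$ then $\mathfrak{b}(M)\subseteq \mathfrak{b}(M/xM)$}: any system of parameters $y_1,\dots,y_{d-1}$ of $M/xM$ extends to a system of parameters $x,y_1,\dots,y_{d-1}$ of $M$, and applying the definition of $\mathfrak{b}(M)$ at index $k+1$ shows that $\mathfrak{b}(M)$ annihilates $(0:y_k)_{M/(x,y_1,\dots,y_{k-1})M}=(0:y_k)_{(M/xM)/(y_1,\dots,y_{k-1})(M/xM)}$. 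Applying this position by position to the shifted sequence yields (iii); for indices $j>i$ we upgrade $\mathfrak{b}(M/(x_{j+1},\dots,x_d)M)\subseteq \mathfrak{b}(M/(x_i,x_{j+1},\dots,x_d)M)$ (using that $x_i$ is a parameter of the intermediate module), while for $j<i$ the relevant quotient module is unchanged.

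Part (ii) is the main obstacle. The difficulty is that $M/(x_{i+1},\dots,x_d)M$ and $M/(x_{i+1}^{n_{i+1}},\dots,x_d^{n_d})M$ are quotients of $M$ by different parameter ideals of the same dimension, and neither is a parameter quotient of the other, so there is no direct comparison of their $\mathfrak{b}$-ideals via the observation above. My plan is to induct on $\sum_{j>i}(n_j-1)$, raising one exponent at a time. At each step the relevant kernel is a subquotient of $x_j^nM$ annihilated by $x_j$, and I would use Theorem~\ref{T2.13} (splitting of local cohomology), applied iteratively as the $C$-system data permits, to transfer $\mathfrak{b}^3$-containments across the short exact sequence
\[
0\to K\to M/(\dots,x_j^{n+1},\dots)\to M/(\dots,x_j^n,\dots)\to 0.
\]

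Finally, part (i) is proved by induction on $d$. The base case $d=1$ is immediate: $x_1\in\mathfrak{b}(M)^3\subseteq\mathfrak{a}_0(M)$ annihilates $0:_M x_1^n\subseteq H^0_\mathfrak{m}(M)$, giving $0:_M x_1^n=0:_M x_1$ for all $n$, which is the strong $d$-sequence property in dimension one. For $d>1$, parts (ii) and (iii) together imply that $x_1,\dots,x_{d-1}$ is a $C$-system of $M/x_d^nM$ for every $n$; the inductive hypothesis then yields the $dd$-sequence recursion. The remaining strong $d$-sequence property---that $x_1^{m_1},\dots,x_d^{m_d}$ is a $d$-sequence on $M$---reduces via (ii) to showing that a $C$-system is a $d$-sequence, which I would derive from the $\mathfrak{b}$-annihilation of the colon modules $(0:x_j)_{M/(x_1,\dots,x_{k-1})M}$ (applied to a suitable reordering of the s.o.p.), the cube $\mathfrak{b}^3$ being used to absorb the intermediate $x_k$ factor.
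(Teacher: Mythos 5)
Your treatment of (iv) coincides with the paper's: both reduce to Lemma \ref{L2.14} applied to the finite-length image of $N$ in each quotient $M/(x_{i+1},\dots,x_d)M$. Your argument for (iii) --- that $\mathfrak{b}(M)\subseteq\mathfrak{b}(M/xM)$ for any parameter element $x$, obtained by extending a system of parameters of $M/xM$ to one of $M$ by prepending $x$ and reading off the definition of $\mathfrak{b}(M)$ at the shifted index --- is correct and is a genuinely self-contained alternative; the paper simply cites \cite{CQ16} for (i), (ii) and (iii) and proves only (iv).

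The gaps are in (ii) and, consequently, (i). For (ii) you offer only a plan, and the mechanism you propose does not work as stated: the kernel of $M/(\dots,x_j^{n+1},\dots)M\to M/(\dots,x_j^{n},\dots)M$ is $\bigl(x_j^{n}M+(\dots)M\bigr)/\bigl(x_j^{n+1}M+(\dots)M\bigr)$, which in general has dimension $d-1$, so neither Lemma \ref{L2.14} (which requires finite length) nor Theorem \ref{T2.13} (a splitting of local cohomology of $M/xM$, not a comparison of the $\mathfrak{b}$-ideals of two different parameter quotients of $M$) transfers the containment $x_i\in\mathfrak{b}(M/(x_{i+1},\dots,x_d)M)^3$ into $\mathfrak{b}(M/(x_{i+1}^{n_{i+1}},\dots,x_d^{n_d})M)^3$. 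This is precisely the substantive content of the cited result in \cite{CQ16} and is not dispatched by the stated induction. Likewise in (i): the base case and the reduction via (ii)--(iii) are fine, but the essential step that a $C$-system of parameters is a $d$-sequence is left unproved. From $\mathfrak{b}(M)\cdot(0:x_i)_{M/(x_1,\dots,x_{i-1})M}=0$ one only gets $\mathfrak{b}(M)\,mx_j\subseteq(x_1,\dots,x_{i-1})M$ for $m\in(x_1,\dots,x_{i-1})M:x_ix_j$, and there is no indicated way to strip off the factor $\mathfrak{b}(M)$ and conclude $mx_j\in(x_1,\dots,x_{i-1})M$; ``the cube being used to absorb the intermediate factor'' is a hope, not an argument. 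So (iii) and (iv) stand, but (ii) and the $d$-sequence half of (i) are genuine gaps --- which is exactly why the paper proves neither and instead invokes \cite{CQ16}.
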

\begin{proof} (i) is \cite[Proposition 4.6]{CQ16}, (ii) is \cite[Corollary 4.5]{CQ16} and (iii) is \cite[Lemma 2.10]{CQ16}.\\
(iv) For each $i \le d$ we have $ M/((x_{i+1}, ...,x_d)M + N)$ is a quotient module of $M/(x_{i+1}, ...,x_d)M$ by a submodule of finite length. So $\frak b(M/(x_{i+1}, ...,x_d)M) \subseteq \frak b(M/((x_{i+1}, ...,x_d)M + N))$ by Lemma \ref{L2.14}. Thus
$$x_i \in \frak b(M/(x_{i+1}, ...,x_d)M)^3 \subseteq \frak b(M/((x_{i+1}, ...,x_d)M + N))^3.$$
\end{proof}

\section{On the polynomial type of generalized fractions}
Since $p(M)$ and $pf(M)$ do not change after passing to the completion. In this section we assume that $(R, \frak m)$ is the image of a Cohen-Macaulay local ring. For each system of parameters $\underline{x} = x_1, ..., x_d$ set
$$I_{\underline{x},M} = \ell(M/(\underline{x})M) - e(\underline{x}; M)$$
and
$$J_{\underline{x},M} = e(\underline{x}; M) -  \ell(M/(\underline{x})_M^{\lim}).$$
It should be noted that $I_{\underline{x},M}$ is much easier to understand than $J_{\underline{x},M}$.
\begin{lemma}\label{L3.1} Let $M$ be a generalized Cohen-Macaulay module and $\underline{x} = x_1, ..., x_d$ a standard system of parameters of $M$. Then
  \begin{enumerate}[{(i)}]\rm
\item $I_{\underline{x},M} = \sum_{i=0}^{d-1}\binom{d-1}{i}\ell(H^i_{\frak m}(M)).$
\item $J_{\underline{x},M} = \sum_{i=1}^{d-1}\binom{d-1}{i-1}\ell(H^i_{\frak m}(M))$.
\end{enumerate}
\end{lemma}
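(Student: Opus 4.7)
Both parts will be proved by induction on $d = \dim M$, with the key input being Theorem 2.13 (splitting of local cohomology modulo a parameter in $\mathfrak{b}(M)^3$) and the fact that a standard system of parameters on a generalized Cohen--Macaulay module descends to a standard system of parameters on $M/x_d M$ of dimension $d-1$.

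For the base case $d=1$, standardness means $x_1$ annihilates $H^0_\mathfrak{m}(M)$ and $x_1$ is regular on $\bar M := M/H^0_\mathfrak{m}(M)$. Hence the short exact sequence $0 \to H^0_\mathfrak{m}(M) \to M/x_1 M \to \bar M/x_1\bar M \to 0$ gives $\ell(M/x_1 M) = \ell(H^0_\mathfrak{m}(M)) + e(x_1; M)$, so $I_{x_1,M} = \ell(H^0_\mathfrak{m}(M))$, proving (i) with $d=1$. A direct computation (as in the discussion preceding Remark 2.9) shows $(x_1)_M^{\lim} = H^0_\mathfrak{m}(M) + x_1 M$, whence $\ell(M/(x_1)_M^{\lim}) = e(x_1; M)$ and $J_{x_1, M} = 0$, matching the empty sum in (ii).

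For the inductive step, apply the long exact sequence of local cohomology to $0 \to M \xrightarrow{x_d} M \to M/x_d M \to 0$. Since standardness forces $x_d H^i_\mathfrak{m}(M) = 0$ for all $i < d$, this sequence breaks into short exact sequences yielding
\[
\ell(H^i_\mathfrak{m}(M/x_d M)) = \ell(H^i_\mathfrak{m}(M)) + \ell(H^{i+1}_\mathfrak{m}(M)) \quad \text{for } 0 \le i \le d-2,
\]
(this is also a consequence of Theorem 2.13 applied to $\bar M$ together with $H^{i+1}_\mathfrak{m}(\bar M) = H^{i+1}_\mathfrak{m}(M)$ for $i \ge 0$). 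Using the additivity $I_{\underline{x}, M} = I_{\underline{x}', M/x_d M} + \ell(H^0_\mathfrak{m}(M))$ obtained from the SES $0 \to H^0_\mathfrak{m}(M) \to M/x_d M \to \bar M/x_d\bar M \to 0$ and $e(\underline{x}; M) = e(\underline{x}'; M/x_d M)$, the inductive hypothesis combined with Pascal's identity $\binom{d-1}{i} = \binom{d-2}{i-1} + \binom{d-2}{i}$ delivers (i).

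For (ii), I would start from the identity $I_{\underline{x}, M} + J_{\underline{x}, M} = \ell\bigl((\underline{x})_M^{\lim}/\underline{x}M\bigr)$, which is immediate from the definitions. By Remark 2.9(ii), this equals $\ell\bigl(\ker(H^d(\underline{x}; M) \to H^d_\mathfrak{m}(M))\bigr)$. I would then show by induction that
\[
\ell\bigl((\underline{x})_M^{\lim}/\underline{x}M\bigr) = \sum_{i=0}^{d-1}\binom{d}{i}\ell(H^i_\mathfrak{m}(M)),
\]
by comparing the Koszul complexes of $\underline{x}$ on $M$ and of $\underline{x}'$ on $M/x_d M$ (using $x_d$-regularity of $\bar M$) to obtain an exact sequence expressing this kernel in terms of the corresponding kernel for $M/x_d M$ together with contributions from the finite-length cohomologies $H^i_\mathfrak{m}(M)$. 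Subtracting (i) and using $\binom{d}{i} - \binom{d-1}{i} = \binom{d-1}{i-1}$ gives (ii). An alternative, which may be shorter, is to invoke Remark 2.9(iii) (legitimate since a standard s.o.p.\ is a $dd$-sequence by Proposition 2.7) and describe $(\underline{x})_M^{\lim}/\underline{x}M$ as the sum of the submodules $\bigl[(x_1, \dots, \widehat{x_i}, \dots, x_d)M :_M x_i\bigr]/\underline{x}M$, each isomorphic to $H^0_\mathfrak{m}\bigl(M/(x_1, \dots, \widehat{x_i}, \dots, x_d)M\bigr)$ modulo intersections, then compute lengths by iterating Theorem 2.13.

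The main obstacle is the inductive step for (ii): limit closures do not commute cleanly with quotienting by $x_d$, so one must carefully bound the kernel of the natural map $(\underline{x})_M^{\lim}/\underline{x}M \to (\underline{x}')_{M/x_d M}^{\lim}/\underline{x}'(M/x_d M)$ and identify its cokernel as a contribution from $H^i_\mathfrak{m}(M)$ of the right length. Either route (the Koszul kernel comparison or the explicit decomposition from Remark 2.9(iii)) requires keeping careful track of how these correction terms assemble into the binomial sums.
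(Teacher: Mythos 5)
The paper does not actually prove this lemma; it quotes part (i) from \cite{Tr86} and part (ii) from \cite[Theorem 5.1]{CHL99}, so any self-contained argument is necessarily a different route. Your plan for (i) is the classical one, but the additivity identity you invoke, $I_{\underline{x},M}=I_{\underline{x}',M/x_dM}+\ell(H^0_{\frak m}(M))$, is false: we have $\ell(M/\underline{x}M)=\ell\bigl((M/x_dM)/\underline{x}'(M/x_dM)\bigr)$ and $e(\underline{x};M)=e(\underline{x}';M/x_dM)$ (the correction term $e(\underline{x}';0:_Mx_d)$ vanishes because $0:_Mx_d$ has finite length and $d-1\ge 1$), hence $I_{\underline{x},M}=I_{\underline{x}',M/x_dM}$ with no extra summand. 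Carrying your extra term through the induction would yield $\ell(H^0_{\frak m}(M))+\sum_{i=0}^{d-1}\binom{d-1}{i}\ell(H^i_{\frak m}(M))$, overshooting (i) by $\ell(H^0_{\frak m}(M))$. The correct step is simply $I_{\underline{x},M}=I_{\underline{x}',M/x_dM}$, the inductive hypothesis, the length identity $\ell(H^i_{\frak m}(M/x_dM))=\ell(H^i_{\frak m}(M))+\ell(H^{i+1}_{\frak m}(M))$ for $i\le d-2$ (which follows from $x_dH^i_{\frak m}(M)=0$ and the long exact sequence alone, without appealing to Theorem \ref{T2.13}, whose hypothesis $x_d\in\frak b(M)^3$ is not guaranteed for a standard system of parameters), and Pascal's identity.

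The genuine gap is in (ii). Your reduction of (ii) to the identity $\ell\bigl((\underline{x})_M^{\lim}/\underline{x}M\bigr)=\sum_{i=0}^{d-1}\binom{d}{i}\ell(H^i_{\frak m}(M))$ is correct and is a sensible target, but the inductive step that would establish it --- relating $(\underline{x})_M^{\lim}/\underline{x}M$ to $(\underline{x}')_{M/x_dM}^{\lim}/\underline{x}'(M/x_dM)$ and accounting for the kernel and cokernel of the comparison map --- is precisely the substantive content of part (ii), and you explicitly defer it as ``the main obstacle'' without carrying it out; the alternative route via Remark \ref{R2.9}(iii) is likewise only gestured at. As written, (ii) is a plan rather than a proof. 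If you want a complete argument, either execute that Koszul comparison in detail or do as the paper does and cite \cite[Theorem 5.1]{CHL99}.
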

\begin{proof} For the definition of standard system of parameters  and the proof of (i) see \cite{Tr86}, (ii) follows from \cite[Theorem 5.1]{CHL99}.
\end{proof}

\begin{lemma}\label{L3.2} Let $\underline{x} = x_1, ..., x_d$ be a system of parameters of $M$ and $U_M(0)$ the unmixed component of $M$. Set $\overline{M} = M/U_M(0)$ we have
\begin{enumerate}[{(i)}]\rm
\item $J_{\underline{x},M} = J_{\underline{x},\overline{M}}$.
\item $J_{\underline{x}, M}(\underline{n}) = J_{\underline{x}, \overline{M}}(\underline{n})$ for all $\underline{n}$.
\item $pf(M) = pf(\overline{M})$.
\end{enumerate}
\end{lemma}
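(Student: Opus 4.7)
The plan is to prove (i) directly, then derive (ii) by applying (i) to the system of parameters $\underline{x}^{\underline{n}}$, and finally (iii) as an immediate consequence of (ii).

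For (i), the function $J_{\underline{x},M} = e(\underline{x};M) - \ell(M/(\underline{x})_M^{\lim})$ splits into two pieces, each of which I would handle separately. First, $\dim U_M(0) < d$ means $e(\underline{x};U_M(0)) = 0$, so the additivity of multiplicity on the short exact sequence $0 \to U_M(0) \to M \to \overline{M} \to 0$ yields $e(\underline{x};M) = e(\underline{x};\overline{M})$. This is the routine half.

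The substantive half is to show
\[
\ell\bigl(M/(\underline{x})_M^{\lim}\bigr) \;=\; \ell\bigl(\overline{M}/(\underline{x})_{\overline{M}}^{\lim}\bigr),
\]
which I would obtain by proving the cleaner statement that the preimage of $(\underline{x})_{\overline{M}}^{\lim}$ under $M \twoheadrightarrow \overline{M}$ equals $(\underline{x})_M^{\lim}$. The inclusion $\supseteq$ is immediate from the definition. For $\subseteq$, suppose $(x_1\cdots x_d)^n m = \sum_i x_i^{n+1} m_i + u$ with $u \in U_M(0)$. The key input is Remark 2.12(i), which guarantees $U_M(0) \subseteq (\underline{x}^{[n+1]})_M^{\lim}$; hence there exists $k$ with $(x_1\cdots x_d)^{(n+1)k} u \in (x_1^{(n+1)(k+1)},\dots,x_d^{(n+1)(k+1)})M$. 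Multiplying the original equation by $(x_1\cdots x_d)^L$ with $L=(n+1)k$, each term $x_i^{n+1}(x_1\cdots x_d)^L m_i$ is divisible by $x_i^{n+1+L}$, while the $u$-term lies in $(x_1^{L+n+1},\ldots,x_d^{L+n+1})M$. Setting $N = n + L$ gives $(x_1\cdots x_d)^N m \in (x_1^{N+1},\ldots,x_d^{N+1})M$, so $m \in (\underline{x})_M^{\lim}$. This exponent-bookkeeping, and the observation that Remark 2.12(i) lets us absorb $u$ into an arbitrarily high power of $\underline{x}$, is the one place where care is required.

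Part (ii) is then a direct application: $x_1^{n_1},\dots,x_d^{n_d}$ is still a system of parameters of $M$, and $U_M(0)$ is intrinsic to $M$ (it does not depend on $\underline{x}$), so (i) applied to $\underline{x}^{\underline{n}}$ gives exactly $J_{\underline{x},M}(\underline{n}) = J_{\underline{x},\overline{M}}(\underline{n})$. Part (iii) follows immediately from (ii), since the polynomial type of generalized fractions is defined by the least degree of polynomials bounding this function above, and the two functions coincide for every choice of $\underline{x}$ and every $\underline{n}$.
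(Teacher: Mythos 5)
Your proposal is correct and follows the same overall route as the paper: multiplicity is unaffected because $\dim U_M(0)<d$, the length equality $\ell(M/(\underline{x})_M^{\lim})=\ell(\overline{M}/(\underline{x})_{\overline{M}}^{\lim})$ gives (i), and (ii), (iii) follow formally by applying (i) to $\underline{x}^{\underline{n}}$. The only difference is that the paper simply cites \cite[Proposition 2.6]{CQ15} for that length equality, whereas you prove it directly (and correctly) by showing the preimage of $(\underline{x})_{\overline{M}}^{\lim}$ is $(\underline{x})_M^{\lim}$, using $U_M(0)\subseteq(\underline{x}^{[n+1]})_M^{\lim}$ from Remark \ref{R2.12}(i) to absorb the error term -- a welcome self-contained substitute for the citation.
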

\begin{proof} (i) Since $\dim U_M(0) < d$ we have $e(\underline{x}; M) = e(\underline{x}; \overline{M})$. For each $n\ge 1$ we set $\underline{x}^{[n]} = x_1^n, ..., x_d^n$. By Remark \ref{R2.12} we have $U_M(0) = \cap_{n \ge 1}(\underline{x}^{[n]})_M^{\lim}$. By \cite[Proposition 2.6]{CQ15} we have
  $$\ell(M/(\underline{x})_M^{\lim}) = \ell(\overline{M}/(\underline{x})_{\overline{M}}^{\lim}).$$
  Therefore $J_{\underline{x},M} = J_{\underline{x}, \overline{M}}$.\\
(ii) follows from (i) and (iii) follows from (ii).
\end{proof}
By the above lemma, we can assume that $M$ is unmixed i.e. $U_M(0) = 0$, for the computation of either the function $J_{\underline{x}, M}(\underline{n})$ or $pf(M)$. The following is important for our inductive technique.
\begin{remark}\label{R3.3} \rm Let $M$ be an unmixed finitely generated $R$-module of dimension $d$. Then
 \begin{enumerate}[{(i)}]
\item $H^1_{\frak m}(M)$ is finitely generated provided $d \ge 2$  (for example see \cite[Lemma 3.1]{GN01}).
\item The set $$\mathcal{F}(M) = \{\mathfrak{p}\in \mathrm{Spec}(R)\,|\, \dim M_{\mathfrak{p}}>
1=\mathrm{depth}M_{\mathfrak{p}},\, \mathfrak{p} \neq \mathfrak{m}
\}$$ is finite (cf. \cite[Lemma 3.2]{GN01}).
\item Let $\underline{x} = x_1, ..., x_d$ be a $C$-system of parameters of $M$. Then
$$\mathcal{F}(M) = \mathrm{Ass}U_{M/x_dM}(0) \setminus \{\frak m\}$$
and $x_1 \notin \frak p$ for all $\frak p \in \mathcal{F}(M)$. Hence $\mathrm{Ass}M/x_1M \subseteq \mathrm{Assh} M/x_1M \cup \{\frak m\}$, so $U_{M/x_1M}(0) \cong H^0_{\frak m}(M/x_1M)$ (cf. \cite[Proposition 4.11, Remark 4.12]{CQ16}).
\end{enumerate}
 \end{remark}

\begin{lemma}\label{L3.4}
Let $M$ be an unmixed finitely generated $R$-module of dimension $d\geq 2$ and $\underline{x} =
x_1, ...,x_d$ a $C$-system of parameters of $M$. Then
$x_1.H^{1}_{\mathfrak{m}}(M) = 0$ and
$\ell(H^{1}_{\mathfrak{m}}(M)) \leq I_{\underline{x}, M}$.
\end{lemma}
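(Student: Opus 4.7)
The plan is to embed $H^1_{\frak m}(M)$ as a submodule of $H^0_{\frak m}(M/(x_2,\ldots,x_d)M)$ via a chain of direct-summand injections obtained from iterated applications of Theorem~\ref{T2.13}, and then exploit the defining $C$-system property together with the $dd$-sequence formula in Proposition~\ref{P2.7}.

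Set $N_{d+1}:=M$ and $N_k:=M/(x_k,\ldots,x_d)M$ for $2\leq k\leq d$, so $\dim N_{k+1}=k$. Since $M$ is unmixed of dimension $d\geq 2$, $H^0_{\frak m}(M)=0$ and $\overline M=M$. Applying Theorem~\ref{T2.13} to $N_{k+1}$ with parameter $x_k\in\frak b(N_{k+1})^3$ at cohomological level $i=0<\dim N_{k+1}-1$ (valid whenever $k\geq 2$) produces
\begin{equation*}
H^0_{\frak m}(N_k)\;\cong\;H^0_{\frak m}(N_{k+1})\;\oplus\;H^1_{\frak m}(\overline{N_{k+1}}).
\end{equation*}
For $k=d$ this specializes to $H^0_{\frak m}(N_d)\cong H^1_{\frak m}(M)$, and for $k=d-1,\ldots,2$ it yields direct-summand injections $H^0_{\frak m}(N_{k+1})\hookrightarrow H^0_{\frak m}(N_k)$. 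Composing gives an injection $H^1_{\frak m}(M)\hookrightarrow H^0_{\frak m}(N_2)$.

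The inclusions $\frak b(N_2)^3\subseteq\frak b(N_2)\subseteq\frak a_0(N_2)=\mathrm{Ann}\,H^0_{\frak m}(N_2)$ recalled in the text, combined with $x_1\in\frak b(N_2)^3$ from the $C$-system hypothesis, give $x_1\cdot H^0_{\frak m}(N_2)=0$; pulling back through the injection above proves $x_1\cdot H^1_{\frak m}(M)=0$. For the length estimate, $H^0_{\frak m}(N_2)\subseteq 0:_{N_2}x_1$, so
\begin{equation*}
\ell(H^1_{\frak m}(M))\;\leq\;\ell(H^0_{\frak m}(N_2))\;\leq\;\ell(0:_{N_2}x_1).
\end{equation*}
By Lemma~\ref{L2.15}(i), $\underline x$ is a $dd$-sequence of $M$, so Proposition~\ref{P2.7} at $\underline n=(1,\ldots,1)$ reads $I_{\underline x,M}=\sum_{i=0}^{p(M)}e_i$ with $e_0=\ell(0:_{N_2}x_1)$ and each $e_i\geq 0$. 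Hence $\ell(0:_{N_2}x_1)=e_0\leq I_{\underline x,M}$, which completes the inequality.

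The main point requiring care is the iterated application of Theorem~\ref{T2.13}: the hypothesis $x_k\in\frak b(N_{k+1})^3$ at each stage is exactly the defining property of a $C$-system of parameters, and the dimension bound $i<\dim N_{k+1}-1$ at $i=0$ reads $k\geq 2$, giving the natural termination of the chain at $N_2$ (and reducing, when $d=2$, to the single isomorphism $H^1_{\frak m}(M)\cong H^0_{\frak m}(N_2)$). Once the chain is in place, the rest is bookkeeping of inequalities.
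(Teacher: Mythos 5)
Your proof is correct, and it takes a genuinely different route from the paper's in both halves. The paper applies Theorem \ref{T2.13} only once, to $M$ with the parameter $x_d$, getting $H^1_{\frak m}(M)\cong H^0_{\frak m}(M_d)$ for $M_d=M/x_dM$; it then deduces $x_1H^1_{\frak m}(M)=0$ from the $d$-sequence identity $H^0_{\frak m}(M_d)=0:_{M_d}x_1$ (valid since $x_1,\dots,x_{d-1}$ is a $dd$-sequence on $M_d$), and obtains the length bound from $H^0_{\frak m}(M_d)\cap(\underline{x}')M_d=0$, which gives $\ell(M_d/(\underline{x}')M_d)\ge \ell(H^0_{\frak m}(M_d))+e(\underline{x}';M_d)$ and hence $\ell(H^1_{\frak m}(M))\le I_{\underline{x}',M_d}=I_{\underline{x},M}$. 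You instead iterate the splitting theorem all the way down to the one-dimensional quotient $N_2$, kill $H^0_{\frak m}(N_2)$ by $x_1$ via Schenzel's inclusion $\frak b(N_2)\subseteq\frak a_0(N_2)$, and read the bound off the constant term $e_0$ of the closed formula in Proposition \ref{P2.7}. Both routes are legitimate: yours invokes the heavier polynomial formula but avoids having to justify the intersection claim $H^0_{\frak m}(M_d)\cap(\underline{x}')M_d=0$. Two points worth making explicit in your write-up: at each stage $x_k$ is indeed a parameter element of $N_{k+1}$ (clear, since $x_1,\dots,x_k$ is a system of parameters of $N_{k+1}$), and when $p(M)=-\infty$ the sum in Proposition \ref{P2.7} is empty, so the step $e_0\le I_{\underline{x},M}$ requires the (trivial) remark that $M$ is then Cohen--Macaulay and $e_0=0=I_{\underline{x},M}$.
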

\begin{proof} Set $M_d = M/x_dM$. Since $M$ is unmixed, by Theorem \ref{T2.13} we have $H^{1}_{\frak{m}}(M) \cong H^{0}_{\frak{m}}(M_d)$. By Lemma \ref{L2.15} we have
$\underline{x}' = x_1, ...,x_{d-1}$ is a $dd$-sequence of $M_d$ so $H^{0}_{\frak{m}}(M_d) = 0:_{M_d}x_1$. Hence
$x_1.H^{1}_{\frak{m}}(M) = 0$. Moreover the properties of $dd$-sequences imply that $H^{0}_{\frak{m}}(M_d) \cap (\underline{x}')M_d = 0$. Thus
\begin{eqnarray*}
\ell (M_d/(\underline{x}')M_d) = \ell(H^{0}_{\frak{m}}(M_d)) + \ell (\overline{M_d}/(\underline{x}')\overline{M_d}) &\ge& \ell(H^{1}_{\frak{m}}(M)) + e(\underline{x}'; \overline{M_d})\\
&=& \ell(H^{1}_{\frak{m}}(M)) + e(\underline{x}'; M_d),
\end{eqnarray*}
where $\overline{M_d} = M_d/H^{0}_{\frak{m}}(M_d)$. Therefore
$$\ell(H^{1}_{\mathfrak{m}}(M)) \le I_{\underline{x}', M_d} = I_{\underline{x}, M}.$$
For the last equality notice that since $x_d$ is $M$-regular we have $e(\underline{x}; M) = e(\underline{x}'; M_d)$. The proof is complete.
\end{proof}

\begin{lemma}\label{L3.5}
Let $M$ be an unmixed finitely generated $R$-module of dimension $d\geq 3$ and $\underline{x} =
x_1, ..., x_d$ a $C$-system of parameters of $M$. Set $M_1 =
M/x_1M$ and $\underline{x}' = x_2, ..., x_d$ we have
$I_{\underline{x},M} \le 2I_{\underline{x}', \overline{M}_1}$, where $\overline{M_1} = M_1/H^0_{\frak m}(M_1)$.
\end{lemma}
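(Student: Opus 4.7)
The plan is to split $I_{\underline{x},M}$ into a main term equal to $I_{\underline{x}', \overline{M_1}}$ plus an error term of length bounded above by $I_{\underline{x}', \overline{M_1}}$. The key tools are the $M$-regularity of $x_1$ (coming from unmixedness of $M$ together with $d \geq 2$), Lemma \ref{L3.4} applied twice — once to $M$ to get the annihilation $x_1 \cdot H^1_{\frak m}(M) = 0$, and once to $\overline{M_1}$ to get a bound on $\ell(H^1_{\frak m}(\overline{M_1}))$ — together with Lemma \ref{L2.15}(iii)--(iv) and Remark \ref{R3.3}(iii), which identifies $H^0_{\frak m}(M_1)$ with the finite-length unmixed component $U_{M_1}(0)$.

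First, since $M$ is unmixed with $d \geq 2$, no parameter element lies in an associated prime of $M$, so $x_1$ is $M$-regular; this yields $e(\underline{x}; M) = e(\underline{x}'; M_1)$ and hence $I_{\underline{x},M} = I_{\underline{x}', M_1}$. Next, applying $- \otimes_R R/(\underline{x}')$ to the short exact sequence $0 \to H^0_{\frak m}(M_1) \to M_1 \to \overline{M_1} \to 0$ produces a surjection $M_1/\underline{x}' M_1 \twoheadrightarrow \overline{M_1}/\underline{x}' \overline{M_1}$ with kernel isomorphic to $H^0_{\frak m}(M_1)/(H^0_{\frak m}(M_1) \cap \underline{x}' M_1)$; together with the equality $e(\underline{x}'; M_1) = e(\underline{x}'; \overline{M_1})$ (both valid because $H^0_{\frak m}(M_1)$ has finite length, by Remark \ref{R3.3}(iii)), this gives
$$I_{\underline{x}', M_1} = I_{\underline{x}', \overline{M_1}} + \ell\bigl(H^0_{\frak m}(M_1)/(H^0_{\frak m}(M_1) \cap \underline{x}' M_1)\bigr).$$
It therefore suffices to prove $\ell(H^0_{\frak m}(M_1)) \leq I_{\underline{x}', \overline{M_1}}$.

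For this I would chase the long exact sequence of local cohomology attached to $0 \to M \xrightarrow{x_1} M \to M_1 \to 0$. Using $H^0_{\frak m}(M) = 0$ and $x_1 \cdot H^1_{\frak m}(M) = 0$ (Lemma \ref{L3.4}), the sequence forces $H^0_{\frak m}(M_1) \cong H^1_{\frak m}(M)$ and an injection $H^1_{\frak m}(M) \hookrightarrow H^1_{\frak m}(M_1)$; the finite length of $H^0_{\frak m}(M_1)$ then gives $H^1_{\frak m}(M_1) \cong H^1_{\frak m}(\overline{M_1})$. Finally, Lemma \ref{L3.4} applied to $\overline{M_1}$ — which is unmixed of dimension $d-1 \geq 2$ and on which $\underline{x}'$ is a $C$-system of parameters by Lemma \ref{L2.15}(iii) and (iv) — delivers the needed inequality $\ell(H^1_{\frak m}(\overline{M_1})) \leq I_{\underline{x}', \overline{M_1}}$. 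The main (and honestly rather minor) delicacy I anticipate is justifying that $\underline{x}'$ is a $C$-system of parameters on $\overline{M_1}$ rather than just on $M_1$; this is precisely the content of Lemma \ref{L2.15}(iv) applied to the finite-length submodule $H^0_{\frak m}(M_1) \subseteq M_1$, and everything else is a routine assembly of standard local-cohomology exact sequences.
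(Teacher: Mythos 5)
Your argument is correct and follows essentially the same route as the paper's proof: reduce to $I_{\underline{x}',M_1}$ via $M$-regularity of $x_1$, split off the $H^0_{\frak m}(M_1)$ contribution, and then bound $\ell(H^0_{\frak m}(M_1))=\ell(H^1_{\frak m}(M))\le\ell(H^1_{\frak m}(\overline{M}_1))\le I_{\underline{x}',\overline{M}_1}$ by the long exact sequence and a second application of Lemma \ref{L3.4} to $\overline{M}_1$. The only (harmless) deviation is in the splitting step: the paper uses the $dd$-sequence property $H^0_{\frak m}(M_1)\cap(\underline{x}')M_1=0$ to get equality $I_{\underline{x}',M_1}=I_{\underline{x}',\overline{M}_1}+\ell(H^0_{\frak m}(M_1))$, whereas you settle for the inequality with error term $\le\ell(H^0_{\frak m}(M_1))$, which suffices.
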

\begin{proof} Since $x_1$ is $M$-regular we have $e(\underline{x}; M) = e(\underline{x}'; M_d)$. So $I_{\underline{x},M} = I_{\underline{x}', M_1}$.
By Lemma \ref{L2.15} we have $\underline{x}' = x_2, ..., x_d$ is a $C$-system of parameters of $M_1$. Similar to the proof of the previous result we have
$$I_{\underline{x}',M_1} = I_{\underline{x}', \overline{M}_1} + \ell(H^{0}_{\frak{m}}(M_1)).$$
Thus we need only to prove that $\ell(H^{0}_{\frak{m}}(M_1)) \le
I_{\underline{x}', \overline{M}_1}$. Consider the following short exact sequence
$$0 \longrightarrow M \overset{x_1\cdot}{\longrightarrow} M \longrightarrow M_1 \longrightarrow 0.$$
By Lemma \ref{L3.4} we have $x_1.H^{1}_{\frak{m}}(M) = 0$. So by applying the local cohomology functor to the above short exact sequence we have
$ H^{0}_{\mathfrak{m}}(M_1)\cong H^{1}_{\mathfrak{m}}(M)$ and
$$0 \longrightarrow H^{1}_{\mathfrak{m}}(M) \longrightarrow H^{1}_{\mathfrak{m}}(M_1).$$
Thus $$\ell(H^{0}_{\mathfrak{m}}(M_1)) =
\ell(H^{1}_{\mathfrak{m}}(M)) \leq
\ell(H^{1}_{\mathfrak{m}}(M_1)).$$ On the other hand by Remark \ref{R3.3} we have $\overline{M_1}$ is unmixed, and $\underline{x}'$ is a $C$-system of parameters of $\overline{M_1}$ by Lemma \ref{L2.15}. So
$$ \ell(H^{1}_{\mathfrak{m}}(M_1)) =
\ell(H^{1}_{\mathfrak{m}}(\overline{M}_1))\leq
I_{\underline{x}', \overline{M}_1}$$ by Lemma \ref{L3.4}. Thus $\ell(H^{0}_{\mathfrak{m}}(M_1)) \leq
I_{\underline{x}', \overline{M}_1}$. The proof is complete.
\end{proof}

\begin{proposition}\label{P3.6}
Let $M$ be an unmixed finitely generated $R$-module of dimension $d$ and $\underline{x} =
x_1, ..., x_d$ a $C$-system of parameters of $M$. Then
$I_{\underline{x},M} \leq 2^{d-2}J_{\underline{x},M}$.
\end{proposition}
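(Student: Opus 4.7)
The plan is to proceed by induction on $d \geq 2$, with the base case $d = 2$ handled directly and the inductive step $d \geq 3$ powered by Lemma~\ref{L3.5}. For the base case, when $M$ is unmixed of dimension two, Remark~\ref{R3.3}(i) makes $H^1_{\mathfrak{m}}(M)$ finitely generated and hence of finite length (being $\mathfrak{m}$-torsion), while $H^0_{\mathfrak{m}}(M)=0$ by unmixedness; thus $M$ is generalized Cohen-Macaulay. Applying the argument of Lemma~\ref{L3.4} together with Theorem~\ref{T2.13} on $x_2 \in \mathfrak{b}(M)^3$ and the strong $d$-sequence property of the $C$-sop (Lemma~\ref{L2.15}(i)), one obtains $I_{\underline{x},M} = \ell(H^1_{\mathfrak{m}}(M))$. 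A parallel analysis, using Remark~\ref{R2.9}(iii) to describe $(\underline{x})^{\lim}_M = (x_1 M :_M x_2) + (x_2 M :_M x_1) + (\underline{x})M$ together with the $dd$-sequence identities $(x_i M :_M x_j^k) = (x_i M :_M x_j)$, yields $J_{\underline{x},M} = \ell(H^1_{\mathfrak{m}}(M))$, so $I_{\underline{x},M} = J_{\underline{x},M}$.

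For the inductive step $d \geq 3$, I would apply Lemma~\ref{L3.5} to get $I_{\underline{x},M} \leq 2\, I_{\underline{x}', \overline{M}_1}$. By Remark~\ref{R3.3}(iii), $U_{M_1}(0) = H^0_{\mathfrak{m}}(M_1)$, so $\overline{M}_1$ is unmixed of dimension $d - 1 \geq 2$; by parts (iii) and (iv) of Lemma~\ref{L2.15}, $\underline{x}' = x_2, \ldots, x_d$ is a $C$-system of parameters of $\overline{M}_1$. The inductive hypothesis then gives $I_{\underline{x}', \overline{M}_1} \leq 2^{d-3} J_{\underline{x}', \overline{M}_1}$, so $I_{\underline{x},M} \leq 2^{d-2} J_{\underline{x}', \overline{M}_1}$.

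It remains to show $J_{\underline{x}', \overline{M}_1} \leq J_{\underline{x}, M}$. By Lemma~\ref{L3.2}(i) applied to $M_1$, one has $J_{\underline{x}', \overline{M}_1} = J_{\underline{x}', M_1}$. Unmixedness of $M$ forces $x_1$ to lie outside every associated prime of $M$, so $x_1$ is $M$-regular and $e(\underline{x};M) = e(\underline{x}'; M_1)$. For the length comparison I would establish the containment $(\underline{x}')_{M_1}^{\lim} \subseteq (\underline{x})_M^{\lim}/x_1 M$ as submodules of $M_1$: given $\bar{m} \in (\underline{x}')_{M_1}^{\lim}$ with $(x_2 \cdots x_d)^n \bar{m} \in (x_2^{n+1}, \ldots, x_d^{n+1}) M_1$, any lift gives $(x_2 \cdots x_d)^n m \in (x_1, x_2^{n+1}, \ldots, x_d^{n+1}) M$, and multiplying by $x_1^n$ produces $(x_1 \cdots x_d)^n m \in (x_1^{n+1}, x_2^{n+1}, \ldots, x_d^{n+1}) M$, so $m \in (\underline{x})_M^{\lim}$. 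This containment forces $\ell(M_1 / (\underline{x}')_{M_1}^{\lim}) \geq \ell(M/(\underline{x})_M^{\lim})$, whence $J_{\underline{x}', M_1} \leq J_{\underline{x}, M}$, yielding $I_{\underline{x},M} \leq 2^{d-2} J_{\underline{x},M}$.

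The main obstacle is the base case $d = 2$: although $M$ is automatically generalized Cohen-Macaulay, one still needs to confirm that the $C$-system of parameters forces $J_{\underline{x}, M}$ to reach the value $\ell(H^1_{\mathfrak{m}}(M))$---equivalently, that the two submodules $(x_1 M :_M x_2)+(\underline{x})M$ and $(x_2 M :_M x_1)+(\underline{x})M$ of $M$ meet exactly in $(\underline{x})M$---which requires a careful use of Lemma~\ref{L3.4} together with the stability of colon ideals provided by the $dd$-sequence.
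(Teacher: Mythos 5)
Your inductive step is exactly the paper's argument: Lemma~\ref{L3.5} gives the factor $2$, the inductive hypothesis gives $2^{d-3}$, Lemma~\ref{L3.2} passes from $\overline{M}_1$ back to $M_1$, and the containment $(\underline{x}')_{M_1}^{\lim} \subseteq (\underline{x})_M^{\lim}/x_1M$ (which you verify correctly by lifting and multiplying by $x_1^n$) gives $J_{\underline{x}',M_1} \le J_{\underline{x},M}$. The only point where you diverge is the base case, and the ``main obstacle'' you flag there is not actually an obstacle: you do not need to compute $(\underline{x})_M^{\lim}$ by hand from Remark~\ref{R2.9}(iii). For $d=2$ an unmixed module is generalized Cohen--Macaulay (as you note), a $C$-system of parameters is then a standard system of parameters, and Lemma~\ref{L3.1}(ii) gives directly $J_{\underline{x},M} = \binom{1}{0}\ell(H^1_{\mathfrak{m}}(M)) = \ell(H^1_{\mathfrak{m}}(M))$, matching $I_{\underline{x},M} = \binom{1}{1}\ell(H^1_{\mathfrak{m}}(M))$ from part (i); this is precisely how the paper closes the base case (it also disposes of $d=1$ trivially, since an unmixed one-dimensional module is Cohen--Macaulay). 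With that substitution your proof is complete and coincides with the paper's.
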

\begin{proof}
We proceed by induction on $d$. The case $d=1$ is trivial since $M$ is Cohen-Macaulay. For $d=2$ by Lemma \ref{L3.1} we have
$$I_{\underline{x},M} = \ell(H^{1}_{\mathfrak{m}}(M)) = J_{\underline{x},M}.$$
Assume that $d \geq 3$ and the assertion was proved for
$d-1$. Set $M_1 = M/x_1M$ and $\underline{x}' = x_2, ...,x_d$ we have
\begin{eqnarray*}
  I_{\underline{x},M} & \le & 2I_{\underline{x}', \overline{M}_1} \quad \quad \,\,(\text{By Lemma \ref{L3.5}})\\
& \leq & 2^{d-2}J_{\underline{x}',\overline{M}_1} \quad (\text{By induction})\\
& = & 2^{d-2}J_{\underline{x}',M_1} \quad \,(\text{By Lemma \ref{L3.2}}).
\end{eqnarray*}
Since $x_1$ is $M$-regular we have $e(\underline{x};M) = e(\underline{x}';M_1)$. On the other hand we have
$$(\underline{x}')_{M_1}^{\lim} = \bigcup_n[(x_1, x_2^{n+1}, ..., x_d^{n+1})M:_M(x_2, ...,x_d)^n]/x_1M \subseteq (\underline{x})_{M}^{\lim}/x_1M.$$
So $\ell(M/(\underline{x})_{M}^{\lim}) \leq \ell(M_1/(\underline{x}')_{M_1}^{\lim})$. Thus $J_{\underline{x}',M_1} \le J_{\underline{x}, M}$.  Therefore we get the assertion $I_{\underline{x},M} \leq 2^{d-2}J_{\underline{x},M}$.
\end{proof}

\begin{theorem}\label{T3.7} Let $(R, \frak m)$ be the image of a Cohen-Macaulay local ring and $M$ an unmixed finitely generated $R$-module of dimension $d$. Then $pf(M) = p(M)$. Moreover $pf(M) = \dim R/\frak a(M)$.
\end{theorem}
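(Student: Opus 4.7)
The plan is to prove both inequalities $pf(M) \le p(M)$ and $p(M) \le pf(M)$. The first is already cited in the introduction as \cite[Theorem 4.5]{M95}, so the real work goes into the reverse inequality, and that is precisely where Proposition \ref{P3.6} pays off.

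For the reverse inequality I would first fix a $C$-system of parameters $\underline{x} = x_1, \ldots, x_d$ of $M$, which exists because $R$ is the image of a Cohen-Macaulay local ring. The crucial observation is that Lemma \ref{L2.15}(ii) allows one to replace $\underline{x}$ by $\underline{x}^{\underline{n}} = x_1^{n_1}, \ldots, x_d^{n_d}$ for any $\underline{n}$ and remain in the realm of $C$-systems of parameters. Applying Proposition \ref{P3.6} to $\underline{x}^{\underline{n}}$ in place of $\underline{x}$ then gives
$$I_{\underline{x}, M}(\underline{n}) = I_{\underline{x}^{\underline{n}}, M} \le 2^{d-2}\, J_{\underline{x}^{\underline{n}}, M} = 2^{d-2}\, J_{\underline{x}, M}(\underline{n})$$
uniformly in $\underline{n}$. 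From here the conclusion $p(M) \le pf(M)$ is immediate: any polynomial bound of degree $k$ on $J_{\underline{x}, M}(\underline{n})$ becomes, after multiplication by the constant $2^{d-2}$, a polynomial bound of the same degree on $I_{\underline{x}, M}(\underline{n})$, so the least admissible degree for $I$ is at most that for $J$.

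For the final assertion I would invoke Proposition \ref{P2.3}(i), which yields $p(M) = \dim R/\mathfrak{a}(M)$ whenever $R$ admits a dualizing complex; to cover the present hypothesis I would pass first to $\widehat{R}$, using that $p$, $pf$, and $\dim R/\mathfrak{a}(M)$ are all preserved by $\mathfrak{m}$-adic completion and that $\widehat{R}$, being a quotient of a complete Cohen-Macaulay local ring, does carry a dualizing complex. The only step that might trip one up is to verify that the $C$-property is stable under taking powers of the parameters, which is exactly what Lemma \ref{L2.15}(ii) supplies; beyond that, all the heavy lifting, namely the induction on $d$ and the splitting Theorem \ref{T2.13}, was already carried out inside Proposition \ref{P3.6}, so no new obstacle remains at this stage.
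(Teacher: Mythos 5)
Your proof follows the paper's argument essentially line by line: cite \cite[Theorem 4.5]{M95} for $pf(M)\le p(M)$, use Lemma \ref{L2.15} to keep powers of a $C$-system of parameters in the $C$-class, apply Proposition \ref{P3.6} to get $I_{\underline{x}, M}(\underline{n}) \leq 2^{d-2}J_{\underline{x}, M}(\underline{n})$ for all $\underline{n}$, and invoke Proposition \ref{P2.3} for the last claim. The one place you are slightly more careful than the paper is the passage to $\widehat{R}$ to guarantee a dualizing complex before applying Proposition \ref{P2.3}(i); the paper leaves this implicit (having noted at the start of the section that $p$ and $pf$ are completion-stable), so this is a welcome clarification rather than a departure.
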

\begin{proof} By \cite[Theorem 4.5]{M95} we have $pf(M) \le p(M)$. Thus we need only to prove $pf(M) \ge p(M)$.
Let $\underline{x} = x_1, ...,x_d$ be a $C$-system of parameters of $M$. By Lemma \ref{L2.15}, for all $d$-tuples of positive integers $\underline{n} = (n_1,...,n_d)$ we have
$\underline{x}^{\underline{n}} = x_1^{n_1},...,x_d^{n_d}$ is also a $C$-system of parameters. By Proposition \ref{P3.6} we have
$$I_{\underline{x}, M}(\underline{n}) = I_{\underline{x}^{\underline{n}},M} \leq 2^{d-2}J_{\underline{x}^{\underline{n}}, M} = 2^{d-2}J_{\underline{x}, M}(\underline{n})$$
for all $\underline{n} = (n_1,...,n_d) \in \mathbb{N}^d$. Thus $p(M) \leq pf(M)$. The last assertion follows from Proposition \ref{P2.3}. The proof is complete.
\end{proof}
The next result is a consequence of the above Theorem and Lemma \ref{L3.2}.
\begin{corollary}\label{C3.8} Let $(R, \frak m)$ be the image of a Cohen-Macaulay local ring and $M$ a finitely generated $R$-module with the unmixed component $U_M(0)$. Then
$$pf(M) = p(M/U_M(0)).$$
\end{corollary}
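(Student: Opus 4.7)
The plan is to deduce the corollary by combining Theorem 3.7 with Lemma 3.2(iii). Set $\overline{M} = M/U_M(0)$. The first step is to observe that $\overline{M}$ is unmixed of dimension $d$: if there were a nonzero submodule $N \subseteq \overline{M}$ with $\dim N < d$, its preimage in $M$ would be a submodule of dimension less than $d$ strictly larger than $U_M(0)$, contradicting the maximality of $U_M(0)$ as the largest such submodule. Equivalently, using the description $U_M(0) = \bigcap_{\mathfrak{p} \in \mathrm{Assh}\,M} N(\mathfrak{p})$ from a reduced primary decomposition, one reads off that $\mathrm{Ass}\,\overline{M} = \mathrm{Assh}\,M$, and in particular every associated prime of $\overline{M}$ has coheight $d$, so $\overline{M}$ is unmixed.

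Next I would apply Lemma 3.2(iii) to obtain $pf(M) = pf(\overline{M})$. Since $\overline{M}$ is unmixed and $(R, \mathfrak{m})$ is the image of a Cohen-Macaulay local ring, Theorem 3.7 applies to $\overline{M}$ and gives $pf(\overline{M}) = p(\overline{M})$. Chaining these equalities yields
\[
pf(M) = pf(\overline{M}) = p(\overline{M}) = p(M/U_M(0)),
\]
which is the desired identity.

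There is essentially no obstacle: the whole content is packaged into the two preceding results, and the only point that requires a line of verification is the unmixedness of $\overline{M}$, which is immediate from the definition of the unmixed component. If one wanted to be extra careful about the standing hypothesis ``$(R, \mathfrak{m})$ is the image of a Cohen-Macaulay local ring'' needed for Theorem 3.7, one would note that this assumption is inherited by any quotient module, so invoking Theorem 3.7 for $\overline{M}$ is legitimate in the same ambient ring $R$. No further computation involving $I_{\underline{x}, M}(\underline{n})$, $J_{\underline{x}, M}(\underline{n})$, $C$-systems of parameters, or local cohomology is required at this final stage.
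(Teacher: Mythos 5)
Your proof is correct and follows exactly the route the paper intends: the paper states Corollary 3.8 as an immediate consequence of Theorem 3.7 and Lemma 3.2, which is precisely the combination you use (Lemma 3.2(iii) gives $pf(M)=pf(M/U_M(0))$ and Theorem 3.7 applied to the unmixed module $M/U_M(0)$ gives the rest). Your added verification that $M/U_M(0)$ is unmixed is a correct and worthwhile detail that the paper leaves implicit.
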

Recall that an $R$-module $M$ is called {\it pseudo (generalized) Cohen-Macaulay} if $pf(M) = 0$ (resp. $pf(M) \le 0$). As a consequence of Corollary \ref{C3.8} we get a generalization of the main result of \cite{CN03}.
\begin{corollary} Let $(R, \frak m)$ be the image of a Cohen-Macaulay local ring and $M$ a finitely generated $R$-module with the unmixed component $U_M(0)$. Then
M is pseudo Cohen-Macaulay (resp. pseudo generalized Cohen-Macaulay) iff $M/U_M(0)$ if Cohen-Macaulay (resp. generalized Cohen-Macaulay).
\end{corollary}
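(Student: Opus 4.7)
The plan is to derive this corollary as an almost immediate consequence of Corollary \ref{C3.8}, combined with the standard characterization of Cohen--Macaulay and generalized Cohen--Macaulay modules via the polynomial type $p$ (the remark following the definition of $p(M)$ in Section~2).

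First, I would invoke Corollary \ref{C3.8} to replace $pf(M)$ with $p(M/U_M(0))$. The cited remark asserts, for any finitely generated module $N$, that $N$ is Cohen--Macaulay iff $p(N)=-\infty$ and that $N$ is generalized Cohen--Macaulay iff $p(N)\le 0$. Applying this to $N = M/U_M(0)$ and comparing with the definition of pseudo (generalized) Cohen--Macaulay recalled just before the statement yields both equivalences simultaneously: $M$ is pseudo Cohen--Macaulay iff $pf(M)=-\infty$ iff $p(M/U_M(0))=-\infty$ iff $M/U_M(0)$ is Cohen--Macaulay, and similarly in the generalized case with $\le 0$ replacing $-\infty$.

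Essentially no obstacle remains at this stage; the substantive work has already been done in Theorem \ref{T3.7} and its Corollary \ref{C3.8}, which extend the identity $pf(M)=p(M/U_M(0))$ from the dualizing-complex setting of Remark \ref{R2.12}(ii) to the weaker hypothesis that $R$ be a homomorphic image of a Cohen--Macaulay local ring. Once that identity is available in the stated generality, the corollary reduces to a pure translation through definitions.
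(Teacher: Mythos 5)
Your argument is correct and is exactly the route the paper intends: the corollary is stated there without further proof as an immediate consequence of Corollary \ref{C3.8} combined with the characterization that a module $N$ is Cohen--Macaulay (resp.\ generalized Cohen--Macaulay) iff $p(N)=-\infty$ (resp.\ $p(N)\le 0$). The only point worth flagging is that the paper's preceding sentence misprints the definition of pseudo Cohen--Macaulay as $pf(M)=0$ rather than $pf(M)=-\infty$; your reading is the correct one, consistent with Remark \ref{R2.12}(ii) and \cite{CN03}.
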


It is natural to raise the following question.
\begin{question}\rm Let $M$ be an unmixed finitely generated $R$-module of dimension $d$ and $\underline{x} = x_1, ..., x_d$ a $C$-system of parameters of $M$. Is it true that that the function $J_{\underline{x}, M}(\underline{n})$ is a polynomial in $\underline{n}$ when $n_1, ..., n_d \gg 0$?
\end{question}
It should be noted that \cite[Theorem 4.5]{CM96} gives an affirmative answer for this question in the case $pf(M) \le 1$.

\section{The case $M$ admits a Macaulayfication}
\begin{definition}\rm Let $M$ be a finitely generated $R$-module of dimension $d$. We say that  $M$ admits a {\it Macaulayfication} $M'$
if we have an exact sequence
$$0\to {M }\to M' \to
{N
}\to 0,$$ where $M'$ is a finitely generated Cohen-Macaulay $R$-module and $\dim N\leq d-2$.

\end{definition}
\begin{remark}[see for example \cite{M07}, \cite{Sch82}] \rm Let $(R,\frak m)$ be a Noetherian complete local ring and $M$ a finitely generated $R$-module of dimension $d$. We recall that if $M$ is unmixed, the module  $D^d(D^d(M))$ (where  $D^d(M)$ is the Matlis dual of $M$)
satisfies the condition $S_{2}$ and we have an exact sequence :
$$0\to {M }\to {D^d(D^d(M)) }\to N
\to 0$$ with $\dim N\leq d-2$.
Moreover if there exist a finitely generated $R$-module $M'$ of dimension $d$, satisfying the
condition $S_{2}$ and an exact sequence :
$$0\to {M}\to {M'}\to {M'/M }\to 0$$ with
$\dim M'/M\leq d-2$, then $M' \cong D^d(D^d(M))$. That is, if $M$ is unmixed the Macaulayfication is unique up to isomorphism (if exist). In this is the case, $\mathrm{Supp}(M'/M)$ is the non Cohen-Macaulay locus of $M$.
\end{remark}
We can state the main result of this section.
\begin{theorem}\label{T4.3}
  Let $M$ be finitely generated $R$-module of dimension $d$. Suppose that $M$ has a Macaulayfication $M'$. Let $\underline{x} = x_1, ..., x_d$ be an arbitrary system of parameters of $M$. Set $N  = M'/M$, then
$$J_{\underline{x}, M}(\underline{n})  = \ell(N/(\underline{x}^{\underline{n}})N)$$
for all $d$-tuples $\underline{n} = (n_1, ..., n_d)$.
\end{theorem}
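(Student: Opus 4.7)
The strategy is to prove the key identity
$$(\underline{x}^{\underline{n}})_M^{\lim} = M \cap (\underline{x}^{\underline{n}})M',$$
from which the conclusion follows by length bookkeeping. For the multiplicity ingredient, additivity of $e(-\,;-)$ applied to $0 \to M \to M' \to N \to 0$, together with $\dim N \le d-2$, yields $e(\underline{x}^{\underline{n}}; M) = e(\underline{x}^{\underline{n}}; M')$. Since $\mathrm{Ann}(M') \subseteq \mathrm{Ann}(M)$, the sequence $\underline{x}$ is also a system of parameters of $M'$; the Cohen--Macaulay hypothesis on $M'$ then makes $\underline{x}^{\underline{n}}$ an $M'$-regular sequence and gives $e(\underline{x}^{\underline{n}}; M') = \ell(M'/(\underline{x}^{\underline{n}})M')$.

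For the key identity I would invoke Remark~\ref{R2.9}(ii), which says that $(\underline{x}^{\underline{n}})_M^{\lim}/(\underline{x}^{\underline{n}})M$ is exactly the kernel of the canonical map $\varphi_M\colon M/(\underline{x}^{\underline{n}})M \to H^d_{\frak m}(M)$, and analogously for $M'$. Applying the long exact sequence in local cohomology to $0 \to M \to M' \to N \to 0$ and using $H^{d-1}_{\frak m}(N) = H^d_{\frak m}(N) = 0$ (forced by $\dim N \le d-2$), the natural map $H^d_{\frak m}(M) \to H^d_{\frak m}(M')$ is an isomorphism. By naturality of the Koszul-to-local-cohomology map, $\varphi_M$ and $\varphi_{M'}$ fit into a commutative square whose vertical maps are $M/(\underline{x}^{\underline{n}})M \to M'/(\underline{x}^{\underline{n}})M'$ and $H^d_{\frak m}(M) \xrightarrow{\cong} H^d_{\frak m}(M')$. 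Since $M'$ is Cohen--Macaulay, Remark~\ref{R2.9}(i) and~(ii) together imply that $\varphi_{M'}$ is injective. A diagram chase then identifies $\ker\varphi_M$ with the kernel of $M/(\underline{x}^{\underline{n}})M \to M'/(\underline{x}^{\underline{n}})M'$, which equals $(M \cap (\underline{x}^{\underline{n}})M')/(\underline{x}^{\underline{n}})M$, giving the key identity.

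Granted this identity, the inclusion $M \hookrightarrow M'$ reduced modulo $(\underline{x}^{\underline{n}})M'$ yields the short exact sequence
$$0 \to M/(M \cap (\underline{x}^{\underline{n}})M') \to M'/(\underline{x}^{\underline{n}})M' \to N/(\underline{x}^{\underline{n}})N \to 0.$$
Taking lengths and combining with the multiplicity formula from the first step gives
$$J_{\underline{x}, M}(\underline{n}) = \ell(M'/(\underline{x}^{\underline{n}})M') - \ell(M/(\underline{x}^{\underline{n}})_M^{\lim}) = \ell(N/(\underline{x}^{\underline{n}})N),$$
which is the desired formula.

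The main obstacle is the diagram chase establishing the key identity: it rests on two independent inputs, namely injectivity of $\varphi_{M'}$ (a consequence of Cohen--Macaulayness of $M'$) and the isomorphism $H^d_{\frak m}(M) \cong H^d_{\frak m}(M')$ (coming from the dimension bound on $N$). Once these are secured, the rest of the argument is routine additivity of length and multiplicity.
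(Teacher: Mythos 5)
Your proposal is correct and follows essentially the same route as the paper's proof: both identify $M/(\underline{x}^{\underline{n}})_M^{\lim}$ with the image of $M/(\underline{x}^{\underline{n}})M$ in $M'/(\underline{x}^{\underline{n}})M'$ using Remark \ref{R2.9}(ii), the isomorphism $H^d_{\frak m}(M)\cong H^d_{\frak m}(M')$ forced by $\dim N\le d-2$, and the injectivity of $M'/(\underline{x}^{\underline{n}})M'\to H^d_{\frak m}(M')$ coming from Cohen--Macaulayness of $M'$, then conclude via the short exact sequence with cokernel $N/(\underline{x}^{\underline{n}})N$ and the equality $e(\underline{x}^{\underline{n}};M)=e(\underline{x}^{\underline{n}};M')=\ell(M'/(\underline{x}^{\underline{n}})M')$. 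One minor slip: $\mathrm{Ann}(M')\subseteq\mathrm{Ann}(M)$ gives $\mathrm{Supp}(M)\subseteq\mathrm{Supp}(M')$, which is the wrong direction for showing that $\underline{x}$ is a system of parameters of $M'$; instead note that $M'$ is Cohen--Macaulay of dimension $d$ and $\dim N\le d-2$ force $\mathrm{Ass}(M')\subseteq\mathrm{Assh}(M)$, whence $\mathrm{Supp}(M')=\mathrm{Supp}(M)$.
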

\begin{proof} For any system of parameters $\underline{y} = y_1, ..., y_d$, the short exact sequence
$$0 \to M \to M' \to N \to 0$$
induces the following commutative diagram with the last two columns exact
\[\divide\dgARROWLENGTH by 2
\begin{diagram}
\node{} \node{} \node{H^{d-1}(\underline{y}; N)} \arrow{e}\arrow{s} \node{H^{d-1}_{\frak m}(N) = 0}\arrow{s}\\
\node{0}\arrow{e}\node{(\underline{y})_M^{\lim}/(\underline{y})M} \arrow{e,t}{\beta}\arrow{s}\node{H^d(\underline{y};M)}\arrow{e}\arrow{s,l}{\alpha}
\node{H^d_{\frak m}(M)}\arrow{s}\\
\node{}\node{0}
\arrow{e}\node{H^d(\underline{y};M')}\arrow{e}\arrow{s} \node{H^d_{\frak m}(M')}\arrow{s}\\
\node{} \node{} \node{H^{d}(\underline{y}; N)} \arrow{e}\arrow{s} \node{H^{d}_{\frak m}(N) = 0}\\
\node{} \node{} \node{0.}
\end{diagram}
\]
Both the second and the third rows are exact by Remark \ref{R2.9}. Therefore we have $\alpha \circ \beta = 0$.  Thus we have the following commutative diagram
\[\divide\dgARROWLENGTH by 2
\begin{diagram}
\node{0}
\arrow{e,}\node{M/(\underline{y})_M^{\lim}}\arrow{e,t}{\pi}\arrow{s,l}{\overline{\alpha}}
\node{H^d_{\frak m}(M)}\arrow{s,l}{\sigma}\\
\node{0}
\arrow{e}\node{M'/(\underline{y})M'} \arrow{e,t}{\tau} \arrow{s}\node{H^d_{\frak m}(M')}\\
\node{} \node{N/(\underline{y})N)}\arrow{s}\\
\node{} \node{0}
\end{diagram}
\]
with the middle column is exact. Moreover we have both $\pi$ and $\tau$ are injective and $\sigma$ is bijective. Therefore $\tau \circ \overline{\alpha} = \sigma \circ \pi$ is injective and so is $\overline{\alpha}$. Hence we have the following short exact sequence
$$0 \to M/(\underline{y})_M^{\lim} \to M'/(\underline{y})M' \to N/(\underline{y})N \to 0.$$
Thus
$$\ell(M/(\underline{y})_M^{\lim}) = \ell(M'/(\underline{y})M') - \ell(N/(\underline{y})N).$$
Now for each $\underline{n} = (n_1, ..., n_d)$, applying the above assertion for the system of parameters $\underline{x}^{\underline{n}} = x_1^{n_1}, ..., x_d^{n_d}$ we have
  $$\ell(M/(\underline{x}^{\underline{n}})_M^{\lim}) = \ell(M'/(\underline{x}^{\underline{n}})M') - \ell(N/(\underline{x}^{\underline{n}})N).$$
  Since $M'$ is Cohen-Macaulay we have
  $$\ell(M'/(\underline{x}^{\underline{n}})M') = e(\underline{x}^{\underline{n}}; M') =e(\underline{x}^{\underline{n}}; M). $$
Therefore $J_{\underline{x}, M}(\underline{n})  = \ell(N/(\underline{x}^{\underline{n}})N)$ for all $d$-tuples $\underline{n} = (n_1, ..., n_d)$. The proof is complete.
\end{proof}
The length $\ell(N/(\underline{x}^{\underline{n}})N)$ is much easier to understand than the function $J_{\underline{x}, M}(\underline{n})$. In many cases we can see that it coincides with a polynomial or a finite number of polynomials for $\underline{n} \gg 0$. The following Corollary extends \cite[Lemma 2.4]{CMN03}.
\begin{corollary}\label{C4.4}  Let $(R ,{\frak m}) $ be a Cohen-Macaulay local ring of dimension  $d\geq 3$, $x_1, ..., x_d$  a system of parameters of $R$.
 Let $M = (x_1, ..., x_{d-v})$, $v \le d-2$. Then for the system of parameters $\underline{x} = x_1 + x_d, x_2, ..., x_d$ of $M$ we have
  $$J_{\underline{x}, M} (\underline{n}) = \ell(R/(x_1, ..., x_{d}))\,\, n_{d-v+1}...n_{d-1} \min \{ n_1, n_d\}$$
for all $n_1, ..., n_d \ge 1$. Therefore $J_{\underline{x}, M} (\underline{n})$ is not a polynomial.
\end{corollary}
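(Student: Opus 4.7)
The plan is to exhibit the obvious Macaulayfication of $M$ and then apply Theorem~\ref{T4.3} to reduce the computation to the length of an explicit Artinian quotient. Since $R$ is Cohen-Macaulay of dimension $d$ and $x_1,\ldots,x_{d-v}$ is part of the regular sequence $x_1,\ldots,x_d$, the inclusion $M = (x_1,\ldots,x_{d-v}) \hookrightarrow R$ fits into a short exact sequence
$$0 \to M \to R \to R/M \to 0$$
in which $R$ is Cohen-Macaulay of dimension $d$ and $N := R/M = R/(x_1,\ldots,x_{d-v})$ is Cohen-Macaulay of dimension $v \leq d-2$. So $M' = R$ is a Macaulayfication of $M$, and Theorem~\ref{T4.3} gives
$$J_{\underline{x},M}(\underline{n}) \;=\; \ell(N/(\underline{x}^{\underline{n}})N).$$

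The next step is to simplify this length on the nose. Working modulo $(x_1,\ldots,x_{d-v})$, the generators $x_i^{n_i}$ for $2 \leq i \leq d-v$ vanish, while the binomial expansion yields
$$(x_1+x_d)^{n_1} \;\equiv\; x_d^{n_1} \pmod{(x_1,\ldots,x_{d-v})}.$$
Combining with the remaining generators and using $(x_d^{n_1}, x_d^{n_d}) = (x_d^{\min\{n_1,n_d\}})$, this yields
$$N/(\underline{x}^{\underline{n}})N \;\cong\; R/\bigl(x_1,\ldots,x_{d-v},\; x_{d-v+1}^{n_{d-v+1}},\ldots,x_{d-1}^{n_{d-1}},\; x_d^{\min\{n_1,n_d\}}\bigr).$$

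To finish, I invoke the standard length formula in the Cohen-Macaulay ring $R/(x_1,\ldots,x_{d-v})$: the images of $x_{d-v+1},\ldots,x_d$ form a regular sequence on it, so powers of the parameters multiply the colength,
$$\ell(N/(\underline{x}^{\underline{n}})N) \;=\; n_{d-v+1}\cdots n_{d-1}\cdot \min\{n_1,n_d\}\cdot \ell(R/(x_1,\ldots,x_d)),$$
which is exactly the claimed formula. Non-polynomiality of $J_{\underline{x},M}(\underline{n})$ then follows because of the presence of the $\min$ term. There is no genuine obstacle in the argument; the only conceptual point to highlight is that the twist $x_1 \rightsquigarrow x_1 + x_d$ is precisely what couples $n_1$ and $n_d$ after reduction modulo $M$. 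With the untwisted system $x_1,\ldots,x_d$, the element $x_1^{n_1}$ would be killed outright in $N$ and the resulting length would be a polynomial in $\underline{n}$.
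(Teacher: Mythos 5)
Your proof is correct and takes essentially the same route as the paper: recognize $R$ itself as the Macaulayfication of $M$ with $N=R/(x_1,\ldots,x_{d-v})$, apply Theorem~\ref{T4.3}, reduce $(x_1+x_d)^{n_1}$ to $x_d^{n_1}$ modulo $(x_1,\ldots,x_{d-v})$ so that $n_1$ and $n_d$ couple through $x_d^{\min\{n_1,n_d\}}$, and finish with multiplicativity of colength along a regular sequence. No gaps.
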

\begin{proof} Since $\dim R/M \le d-2$, $R$ is a Macaulayfication of $M$. By Theorem \ref{T4.3} we have
$$J_{\underline{x}, M}(\underline{n})  = \ell(R/(x_1, ..., x_{d-v}, (x_1+x_d)^{n_1}, x_2^{n_2}, ..., x_d^{n_d})$$
for all $n_1, ..., n_d \ge 1$. Hence
\begin{eqnarray*}
  J_{\underline{x}, M}(\underline{n})  &=& \ell(R/(x_1, ..., x_{d-v}, x_{d-v+1}^{n_{d-v+1}}, ..., x_{d-1}^{n_{d-1}}, x_d^{\min \{n_1, \, n_d\}})\\
&=&  \ell(R/(x_1, ..., x_{d}))\,\, n_{d-v+1}...n_{d-1} \min \{ n_1, n_d\}
\end{eqnarray*}
for all $n_1, ..., n_d \ge 1$.
\end{proof}

The next result follows from Theorem \ref{T4.3} and Proposition \ref{P2.7}.
\begin{corollary}\label{C4.5}
 Let $M$ be a finitely generated $R$-module of dimension $d$. Suppose that $M$ has a Macaulayfication $M'$ with $\dim M'/M = t$. Let $\underline{x} = x_1, ..., x_d$ be any system of parameters of $M$ such that $x_1, ..., x_t$ forms a $dd$-sequence of $N  = M'/M$ and $x_{t+1}, ..., x_d \in \mathrm{Ann}N$. Then $J_{\underline{x}, M}(\underline{n})$ is a polynomial in $\underline{n}$ for all $n_1, ..., n_d \ge 1$. Moreover
 $$J_{\underline{x}, M}(\underline{n}) = n_1...n_t e(x_1, ..., x_t; N) +
\sum_{i=0}^{t-1} n_1...n_i e_i,$$ where
$e_i = e(x_1,...,x_i; 0:_{N/(x_{i+2},...,x_t)N}x_{i+1})$ and $e_0 =
\ell(0:_{N/(x_{2},...,x_t)N}x_{1})$.
\end{corollary}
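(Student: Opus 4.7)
The plan is to combine Theorem~\ref{T4.3} with Proposition~\ref{P2.7}. Theorem~\ref{T4.3} converts $J_{\underline{x},M}(\underline{n})$ into the length $\ell(N/(\underline{x}^{\underline{n}})N)$; the annihilation hypothesis collapses this to a computation on $N$ involving only the first $t$ parameters; and the $dd$-sequence hypothesis then allows Proposition~\ref{P2.7} to deliver the explicit polynomial formula.

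Concretely, I would first invoke Theorem~\ref{T4.3} to obtain $J_{\underline{x},M}(\underline{n}) = \ell(N/(\underline{x}^{\underline{n}})N)$ for every $\underline{n}$. Since $x_{t+1},\ldots,x_d \in \mathrm{Ann}\, N$, their powers $x_{t+1}^{n_{t+1}},\ldots,x_d^{n_d}$ also act as zero on $N$, so $(\underline{x}^{\underline{n}})N = (x_1^{n_1},\ldots,x_t^{n_t})N$ and therefore
$$J_{\underline{x},M}(\underline{n}) = \ell\bigl(N/(x_1^{n_1},\ldots,x_t^{n_t})N\bigr).$$
Before proceeding, I would verify that $x_1,\ldots,x_t$ is a genuine system of parameters of $N$: the length on the right is finite (being a difference of finite lengths from the definition of $J$), which together with $\dim N = t$ forces $x_1,\ldots,x_t$ to be a sop of $N$.

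With this in place, Proposition~\ref{P2.7} applies to $N$ and the $dd$-sequence sop $x_1,\ldots,x_t$, yielding
$$I_{(x_1,\ldots,x_t),\, N}(n_1,\ldots,n_t) = \sum_{i=0}^{p(N)} n_1 \cdots n_i\, e_i,$$
with $e_i = e(x_1,\ldots,x_i;\, 0:_{N/(x_{i+2},\ldots,x_t)N} x_{i+1})$ and $e_0 = \ell(0:_{N/(x_2,\ldots,x_t)N} x_1)$. Since $p(N) \le \dim N - 1 = t-1$, the summation can be harmlessly extended up to $t-1$. Unpacking the definition of $I$ and using the standard scaling $e(x_1^{n_1},\ldots,x_t^{n_t}; N) = n_1 \cdots n_t\, e(x_1,\ldots,x_t; N)$, the claimed closed form for $J_{\underline{x},M}(\underline{n})$ falls out.

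The only step with any genuine content is verifying that $x_1,\ldots,x_t$ really is a system of parameters of $N$ so that Proposition~\ref{P2.7} is legitimately applicable; everything else—Theorem~\ref{T4.3}, the annihilation reduction, and the multiplicative scaling of the Serre multiplicity—is direct assembly of results already in hand, so there is no substantial obstacle beyond bookkeeping.
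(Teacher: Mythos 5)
Your proof is correct and is essentially the paper's own argument: the paper proves this corollary simply by citing Theorem~\ref{T4.3} and Proposition~\ref{P2.7}, exactly the two ingredients you assemble. Your additional care in checking that $x_1,\ldots,x_t$ is a system of parameters of $N$ is a reasonable piece of bookkeeping that the paper leaves implicit.
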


\section{Relation with the Hilbert-Kunz function}
By considering all explicit examples, it can be expected  that  $J_{\underline{x}, M}(\underline{n})$  coincides with finitely
 many polynomials in $\underline{n}$ (cf. \cite{CMN03}, \cite{MN03}). As we will see this is not always the case. More precisely,
 we will give an example in characteristic $p$ such that the function $J_{\underline{x}, M}(\underline{n})$ can not be controlled by
finitely many polynomials. This question  is closely related to the
Hilbert-Kunz function.\\
Let $(A, \frak n)$ be a Noetherian local ring containing a field of positive characteristic $p$. Let $I$ be an ideal of $A$ and a prime power
$q = p^e$ we define $I^{[q]} = (f^q | q \in I)$ as the $e$-th Frobenius power of $I$. If $I$ is an $\frak n$-primary ideal we always have that $A/I^{[q]}$
 has finite length. So we have a function
$$f_{HK}(I): q \mapsto \ell (A/I^{[q]}),$$
called the Hilbert-Kunz function, which was  first studied by E. Kunz in \cite{K69}.
In \cite{Mo83}, P. Monsky proved that the limit
$$e_{HK}(I) = \lim_{q \to \infty} \frac{\ell (A/I^{[q]})}{q^{\dim A}}$$
exists as a real number; it is called the Hilbert-Kunz multiplicity of $I$, and
the Hilbert-Kunz multiplicity of $\frak n$ is also called the Hilbert-Kunz multiplicity
of $A$. It is natural to ask whether the Hilbert-Kunz multiplicity of an $\frak n$-primary ideal is always a rational number.
 There are many positive partial answers to this question. However, recently H. Brenner disproved this question by the following celebrate result.
\begin{theorem}[\cite{B14}, Theorem 8.3] There exists a Noetherian local domain whose Hilbert-Kunz
multiplicity is an irrational number.
\end{theorem}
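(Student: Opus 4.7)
The plan is to construct a Noetherian local domain $(A, \mathfrak{n})$ in positive characteristic $p$ whose Hilbert--Kunz multiplicity $e_{HK}(\mathfrak{n})$ is provably irrational. The natural strategy, following the theory developed by Brenner and Trivedi, is to reduce the computation of $e_{HK}$ to a geometric invariant of a projective curve, where irrationality can be extracted from Frobenius data on vector bundles.

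First I would specialize to a two-dimensional standard graded normal domain $A = k[X,Y,Z]/(f)$, where $k$ is algebraically closed of characteristic $p$ and $f$ is a homogeneous form defining a smooth projective curve $C = \mathrm{Proj}(A) \subset \mathbb{P}^2_k$. By the Brenner--Trivedi formula, $e_{HK}(\mathfrak{n})$ can be expressed via the asymptotic Harder--Narasimhan filtration of the Frobenius pullbacks $F^{e*}\mathrm{Syz}(X,Y,Z)$ of the syzygy bundle on $C$. Concretely, if the normalized slopes $\mu_i^{(e)}/p^e$ of the HN quotients converge to real numbers $\bar\mu_i$ with ranks $r_i$, then schematically
$$e_{HK}(\mathfrak{n}) = \frac{1}{2\deg C}\sum_i r_i \,\bar\mu_i^{\,2} + (\text{explicit rational correction terms}).$$
So every strongly semistable situation yields a rational number, and irrationality must come from a non-trivial asymptotic HN structure.

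Second, the problem reduces to exhibiting a smooth curve $C$ and a prime $p$ for which some limiting slope $\bar\mu_i$ is provably irrational. This is the main obstacle: in every classically computed example, the limiting slopes turn out to be rational. Brenner's idea is to consider curves on which the syzygy bundle decomposes in a controlled way under Frobenius, and to analyze the limiting slopes through the spectral radius of an associated integer matrix (or through a continuous variation argument within a family, exploiting the countability of $\mathbb{Q}$). A concrete construction uses curves whose iterated Frobenius pullbacks produce subsheaves of degree governed by a linear recurrence; the normalized limit is then the dominant eigenvalue of the recurrence matrix, which one can arrange to be a quadratic irrationality such as $\tfrac{1+\sqrt{5}}{2}$.

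Finally, substituting such an irrational $\bar\mu_i$ into the Brenner--Trivedi formula forces $e_{HK}(\mathfrak{n})$ to be irrational, and the completion $\widehat{A_{\mathfrak{n}}}$ supplies the desired Noetherian local domain. The delicate part throughout is the rigorous control of the asymptotic slopes and the verification that the resulting number is genuinely irrational rather than merely non-obviously rational; all other steps are formal consequences of the geometric dictionary between Hilbert--Kunz multiplicities and slopes of Frobenius pullbacks.
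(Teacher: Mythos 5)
The paper you are reading does not prove this statement at all: it is imported verbatim from Brenner's preprint \cite{B14} and used as a black box in Section 5, so there is no internal proof to compare yours against. Judged on its own terms, your proposal is a strategy outline rather than a proof (all of the actual work is deferred to ``the delicate part''), and, more seriously, its opening reduction is fatally flawed. You propose to realize the irrationality inside a two-dimensional standard graded normal domain $A=k[X,Y,Z]/(f)$ with $C=\mathrm{Proj}(A)$ a smooth plane curve, via the limiting Harder--Narasimhan slopes of the Frobenius pullbacks of the syzygy bundle. But in exactly this setting the Hilbert--Kunz multiplicity is \emph{always rational}: Brenner (``The rationality of the Hilbert--Kunz multiplicity in graded dimension two,'' Math.\ Ann.\ 334 (2006)) and, independently, Trivedi proved that the Harder--Narasimhan filtration of $F^{e*}\mathrm{Syz}(X,Y,Z)$ becomes strong (i.e.\ has strongly semistable quotients) after finitely many Frobenius pullbacks, so every limiting slope $\bar\mu_i$ is rational and your displayed formula can only output a rational number. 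No choice of $f$ and $p$ can make your second step succeed; in particular the claim that one ``can arrange'' a dominant eigenvalue equal to $\tfrac{1+\sqrt5}{2}$ contradicts a known theorem.

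This is precisely why Brenner's actual construction must leave graded dimension two: he propagates the two-dimensional geometric data through a family (passing to higher-dimensional rings, and then to a domain by a further algebraic reduction), so that infinitely many distinct Frobenius ``jumping levels'' each contribute a rational term to a convergent series whose sum he can show is irrational. If you want to salvage your write-up, you would need to (a) replace the smooth plane curve setting by a construction in dimension at least three, (b) exhibit the specific equations and prime, and (c) actually carry out the irrationality verification rather than asserting it.
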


We are ready to prove the main result of this section.

\begin{theorem}\label{T5.2} There exist a regular local ring $(R, \frak m)$ of dimension $d$ with $\frak m$ generates by a regular system of parameters $\underline{x} = (x_1, ..., x_d)$ and a finitely generated $R$-module $M$, $\dim M = d$ such that the function $J_{\underline{x}, M}(n) = n^d e(\underline{x}; M) - \ell(M/(\underline{x}^{[n]})^{\lim}_M)$ can not be represented by finitely many polynomials in $n$, where $\underline{x}^{[n]} = x_1^n, ..., x_d^n$.
\end{theorem}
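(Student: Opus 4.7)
The strategy is to use the Macaulayfication formula of Theorem~\ref{T4.3} to reinterpret $J_{\underline{x},M}$ as a Hilbert--Kunz function, and then invoke Brenner's irrationality theorem. Choose a Noetherian local domain $(A,\mathfrak{m}_A)$ of characteristic $p>0$ whose Hilbert--Kunz multiplicity $e_{HK}(\mathfrak{m}_A)$ is irrational, as guaranteed by \cite[Theorem 8.3]{B14}. Since the lengths $\ell(A/\mathfrak{m}_A^{[q]})$ are unchanged under $\mathfrak{m}_A$-adic completion, we may assume $A$ is complete. Irrationality of $e_{HK}$ forces $\dim A\ge 1$, so Cohen's structure theorem, combined with the formal adjunction of two auxiliary indeterminates $z_1,z_2$, lets us realize $A\cong R/I$ with $R$ a complete regular local ring of characteristic $p$ and $\dim R=d:=\dim A+2\ge 3$. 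Fix a regular system of parameters $\underline{x}=x_1,\dots,x_d$ generating $\mathfrak{m}_R$; their images generate $\mathfrak{m}_A$.

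Put $M:=I$, regarded as an $R$-submodule of $R$. Because $\dim R/I=d-2$, the short exact sequence $0\to I\to R\to A\to 0$ presents $R$ as a Macaulayfication of $M$ (with $N=A$). By Theorem~\ref{T4.3}, specialized to $\underline{n}=(n,\dots,n)$,
\[
J_{\underline{x},M}(n)\;=\;\ell\bigl(A/(x_1^n,\dots,x_d^n)A\bigr)
\]
for every $n\ge 1$. When $n=q=p^e$, the characteristic-$p$ identity $(a+b)^q=a^q+b^q$ gives $(x_1^q,\dots,x_d^q)A=\mathfrak{m}_A^{[q]}$, and hence $J_{\underline{x},M}(q)=\ell(A/\mathfrak{m}_A^{[q]})=f_{HK}(\mathfrak{m}_A)(q)$ for every prime power $q=p^e$.

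Suppose, for contradiction, that $J_{\underline{x},M}(n)$ coincides, for $n\gg 0$, with one of finitely many polynomials $P_1,\dots,P_k$. The pigeonhole principle applied to the infinite set $\{p^e:e\ge 1\}$ forces some $P_i$ to satisfy $P_i(p^{e_j})=f_{HK}(\mathfrak{m}_A)(p^{e_j})$ along an infinite subsequence $\{e_j\}$. Lagrange interpolation at any $\deg P_i+1$ of those integer nodes shows $P_i\in\mathbb{Q}[n]$, while the standard estimate $f_{HK}(\mathfrak{m}_A)(q)/q^{\dim A}\to e_{HK}(\mathfrak{m}_A)>0$ forces $\deg P_i=\dim A$ and pins its leading coefficient to $e_{HK}(\mathfrak{m}_A)$. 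Consequently $e_{HK}(\mathfrak{m}_A)\in\mathbb{Q}$, contradicting the choice of $A$. The only genuinely delicate step is this rationality argument --- passing from polynomial matching along the sparse geometric sequence $\{p^e\}$ to rationality of the leading coefficient --- together with the characteristic-$p$ identification $(x_1^q,\dots,x_d^q)A=\mathfrak{m}_A^{[q]}$; given these, the rest is a direct application of Theorem~\ref{T4.3} and Brenner's theorem.
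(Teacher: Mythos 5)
Your proposal is correct and follows essentially the same route as the paper: realize Brenner's domain $A$ with irrational Hilbert--Kunz multiplicity as $R/I$ with $R$ regular and $\dim R \ge \dim A + 2$, take $M = I$ so that $R$ is a Macaulayfication of $M$, apply Theorem~\ref{T4.3} to get $J_{\underline{x},M}(n)=\ell(A/(\underline{x}^{[n]})A)$, and derive rationality of $e_{HK}(A)$ from any finite polynomial representation by evaluating at prime powers $q=p^e$. The only cosmetic difference is that you adjoin two indeterminates outright where the paper adjoins one variable only when needed; the contradiction argument (pigeonhole plus rationality of a polynomial interpolating integer values) is identical.
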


\begin{proof} Let $(A, \frak n)$ be the ring of characteristic $p$ whose Hilbert-Kunz multiplicity is irrational as Brenner's result. Replacing $A$ by its completion, notice that  the Hilbert-Kunz multiplicity does not change, we can assume that $(A, \frak n)$ is complete. By the Cohen structure theorem we have that $A$ is the image of a regular local ring $(R, \frak m)$ of dimension $d$. Since $e_{HK}(A)$ is irrational we have $A$ is not regular and so $\dim R - \dim A \ge 1$. If $\dim R - \dim A = 1$ we replace $R$ by $R[X]_{(\frak m, X)R[X]}$. Henceforth we can assume that $\dim R - \dim A \ge 2$. Let the $R$-module $M$ be the kernel of the canonical map $R \to A$, we have $\dim M = d$. Choose a regular system of parameters $\underline{x} = x_1, ..., x_d$ generates $\frak m$. By Theorem \ref{T4.3} we have
  $$J_{\underline{x}, M}(n) = \ell (A/(\underline{x}^{[n]})A)$$
for all $n \ge 1$.  For all $i =  1, ..., d$ we denote by $a_i $ the image of $x_i$ in $A$. We have the sequence $\underline{a} = a_1, ..., a_d$ generates the maximal ideal $\frak n$ of $A$. Now we assume that there are only finitely many polynomials $P_1(n), ..., P_r(n)$ such that for each $n \ge 1$ we have $J_{\underline{x}, M}(n) = P_i(n)$ for some $i$ and find a contradiction. We consider the case $n$ is a prime power $q = p^e$ we have
$$J_{\underline{x}, M}(q) = \ell (A/\underline{a}^{[q]})  = \ell(A/\frak n^{[q]}).$$
Since there are infinitely many $q$, we must have a polynomial, says $P_1(n)$, such that
$$\ell(A/\frak n^{[q]}) = P_1(q)$$
for infinitely many $q = p^e$. It should be noted that if a polynomial takes integer values at infinitely many integer numbers, then all of its coefficients are rational. Thus the leading coefficient of $P_1(n)$ is a rational number and $\mathrm{deg}P_1(n) = \dim A$. So
$$e_{HK}(A) = \lim_{q \to \infty} \frac{\ell (A/\frak n^{[q]})}{q^{\dim A}} = \lim_{q \to \infty} \frac{P_1(q)}{q^{\dim A}}$$
is a rational number. It is a contradiction with our assumption about $A$. The proof is complete.

\end{proof}
For the next result we need the concept of the principle of {\it idealization}. Let $(R, \frak m)$ be a Noetherian local ring and $M$ a finitely generated $R$-module. We make the Cartesian product $R \times M$ into a commutative ring with respect to
component-wise addition and multiplication defined by $(r, m) \cdot (r', m') = (rr', rm' + r'm)$. We call this the idealization of $M$ (over $R$) and denote it by $R \ltimes M$. The idealization
$R \ltimes M$ is Noetherian local ring with identity $(1, 0)$, its maximal ideal is $\frak m \times M$ and
its Krull dimension is $\dim R$. If $\underline{x} = x_1, ..., x_d$ is a system of parameters of $R$ then $(\underline{x, 0}) = (x_1, 0), ..., (x_d, 0)$ is a system of parameters of the idealization $R \ltimes M$.
\begin{lemma}[\cite{CMN03}, Lemma 2.6]\label{L5.3} Let $\dim M = \dim R = d$ and $S = R \ltimes M$. Let $\underline{x} = x_1, ..., x_d$ is a system of parameters of $R$. Then we have
  $$\ell(S/(\underline{x, 0})^{\lim}_S) = \ell(R/(\underline{x})^{\lim}_R) + \ell(M/(\underline{x})^{\lim}_M).$$
\end{lemma}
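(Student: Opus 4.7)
The plan is to exploit that $I := 0 \times M$ is a square-zero ideal of $S$, so multiplication on $S$ acts transparently on $(x_i, 0)$: one has $(x_i, 0)^k = (x_i^k, 0)$, and the ideal of $S$ generated by $(x_1^{n+1}, 0), \dots, (x_d^{n+1}, 0)$ is precisely the Cartesian product $(\underline{x})^{[n+1]}R \times (\underline{x})^{[n+1]}M$ inside $S = R \times M$. A direct componentwise computation then shows that the colon submodule defining the $n$-th stage of the limit closure decouples:
$$(\underline{x, 0})^{[n+1]} :_S (x_1 \cdots x_d, 0)^n \;=\; \bigl((\underline{x})^{[n+1]} :_R (x_1 \cdots x_d)^n\bigr) \,\times\, \bigl((\underline{x})^{[n+1]}M :_M (x_1 \cdots x_d)^n\bigr).$$
Taking the union over $n > 0$, and noting that any single sufficiently large index realizes an element of the product of the two unions (since both factor families are ascending chains), yields the key identity
$$(\underline{x, 0})^{\lim}_S \;=\; (\underline{x})^{\lim}_R \,\times\, (\underline{x})^{\lim}_M$$
inside $R \times M$.

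With this in hand, set $J = (\underline{x, 0})^{\lim}_S$. Directly one computes $I \cap J = 0 \times (\underline{x})^{\lim}_M$ and $I + J = (\underline{x})^{\lim}_R \times M$, so there is a short exact sequence of $S$-modules
$$0 \longrightarrow I/(I \cap J) \longrightarrow S/J \longrightarrow S/(I + J) \longrightarrow 0$$
whose outer terms are isomorphic to $M/(\underline{x})^{\lim}_M$ and $R/(\underline{x})^{\lim}_R$, respectively. Both outer terms are annihilated by $I$, hence are $R$-modules via $S \twoheadrightarrow S/I \cong R$; and on such a module the lattices of $R$- and $S$-submodules coincide, so the $S$-length equals the $R$-length. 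Additivity of length along this exact sequence now delivers the claimed formula.

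The only real subtlety is the commutation of the ascending union over $n$ with the Cartesian product in the first step; this is harmless because any element of the product of two ascending unions already lies in the product at a common sufficiently large index. The remaining verifications are routine bookkeeping with the idealization structure.
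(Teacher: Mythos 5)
Your proof is correct. The paper itself gives no argument for this statement --- it is quoted verbatim from \cite{CMN03}, Lemma 2.6 --- so there is nothing to compare against, but your direct verification is sound and self-contained: the ideal of $S=R\ltimes M$ generated by the $(x_i^{n+1},0)$ really is the Cartesian product of the corresponding ideals/submodules of $R$ and $M$ (since $(0,m_i)(x_i^{n+1},0)=(0,m_ix_i^{n+1})$, the two components can be chosen independently), the colon therefore splits componentwise, the union over $n$ commutes with the product because the colon modules form an ascending chain, and the length count via $0\to I/(I\cap J)\to S/J\to S/(I+J)\to 0$ with $I=0\times M$ correctly reduces $S$-lengths to $R$-lengths. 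The only point worth making explicit in a write-up is that $(\underline{x})^{\lim}_R\times(\underline{x})^{\lim}_M$ is indeed an $S$-submodule (equivalently $(\underline{x})^{\lim}_R\cdot M\subseteq(\underline{x})^{\lim}_M$), which is automatic since it arises as a union of ideals of $S$.
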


Now we prove the last result of this paper.
\begin{corollary} There exists a Noetherian local ring $(S, \frak n)$ of dimension $d$ and a system of parameters $\underline{y} = y_1, ..., y_d$ such that the function $J_{\underline{y},S}(n)$ can not be represented by finitely many polynomials in $n$.
  \end{corollary}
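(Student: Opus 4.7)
The plan is to take $(R, \frak{m})$, $M$, $\underline{x} = x_1, \dots, x_d$ as produced in Theorem~\ref{T5.2}, and form the idealization $S = R \ltimes M$ with the system of parameters $\underline{y} = (x_1, 0), \dots, (x_d, 0)$. The strategy is to show that $J_{\underline{y}, S}(n) = J_{\underline{x}, M}(n)$ for every $n \ge 1$, from which the conclusion is immediate by Theorem~\ref{T5.2}.

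The first step is to compute $e(\underline{y}; S)$. The short exact sequence of $S$-modules
$$0 \to 0 \times M \to S \to R \to 0,$$
in which both the submodule $0 \times M$ (annihilated by itself, since $(0 \times M)^2 = 0$) and the quotient $R$ carry $S$-structure pulled back from $S \twoheadrightarrow R$, yields by additivity of multiplicity
$$e(\underline{y}; S) = e(\underline{y}; 0 \times M) + e(\underline{y}; R) = e(\underline{x}; M) + e(\underline{x}; R).$$
Since $R$ is regular with regular system of parameters $\underline{x}$, one has $e(\underline{x}; R) = 1$; and since in Theorem~\ref{T5.2} the module $M$ is the kernel of $R \twoheadrightarrow A$ with $\dim A \le d - 2$, additivity applied to $0 \to M \to R \to A \to 0$ gives $e(\underline{x}; M) = e(\underline{x}; R) = 1$. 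Hence $e(\underline{y}; S) = 2$.

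The second step is to apply Lemma~\ref{L5.3} to the parameter system $\underline{x}^{[n]} = x_1^n, \dots, x_d^n$ of $R$, noting that $(\underline{x}^{[n]}, 0) = \underline{y}^{[n]}$:
$$\ell\bigl(S/(\underline{y}^{[n]})^{\lim}_S\bigr) = \ell\bigl(R/(\underline{x}^{[n]})^{\lim}_R\bigr) + \ell\bigl(M/(\underline{x}^{[n]})^{\lim}_M\bigr).$$
Because $R$ is Cohen-Macaulay, $(\underline{x}^{[n]})^{\lim}_R = (\underline{x}^{[n]})R$, and thus $\ell(R/(\underline{x}^{[n]})^{\lim}_R) = n^d$. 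Combining this with the first step,
$$J_{\underline{y}, S}(n) = 2 n^d - n^d - \ell\bigl(M/(\underline{x}^{[n]})^{\lim}_M\bigr) = n^d \, e(\underline{x}; M) - \ell\bigl(M/(\underline{x}^{[n]})^{\lim}_M\bigr) = J_{\underline{x}, M}(n).$$

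Theorem~\ref{T5.2} therefore forces $J_{\underline{y}, S}(n)$ to fail to be representable by finitely many polynomials. There is no serious obstacle: the only points worth checking are that $\underline{y}$ is indeed a system of parameters of $S$ (clear since $S/(\underline{y})S$ is isomorphic as an abelian group to $R/(\underline{x})R \oplus M/(\underline{x})M$, which is Artinian, while $\dim S = \dim R = d$) and that the additivity argument yielding $e(\underline{x}; M) = 1$ uses $\dim R/M = \dim A \le d - 2$, which was arranged in the opening lines of the proof of Theorem~\ref{T5.2}.
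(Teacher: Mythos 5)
Your proposal is correct and follows essentially the same route as the paper: form the idealization $S = R \ltimes M$ with $\underline{y} = (x_1,0),\dots,(x_d,0)$, use additivity of multiplicity and Lemma~\ref{L5.3} together with $(\underline{x}^{[n]})^{\lim}_R = (\underline{x}^{[n]})R$ for the regular ring $R$ to get $J_{\underline{y},S}(n) = J_{\underline{x},M}(n)$, and invoke Theorem~\ref{T5.2}. The extra details you supply (the exact sequence $0 \to 0\times M \to S \to R \to 0$, the value $e(\underline{x};M)=1$, the check that $\underline{y}$ is a system of parameters) are correct elaborations of steps the paper leaves to the reader.
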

\begin{proof} We choose $(R, \frak m)$ and $M$ as in Theorem \ref{T5.2}. Let $\underline{x} = x_1, ..., x_d$ be a regular system of parameters of $R$. Let $S = R \ltimes M$ and $\underline{y} = (x_1, 0), ..., (x_d, 0)$. We can check that $e(\underline{y}; S) = e(\underline{x}; R) + e(\underline{x}; M)$. Since $R$ is regular we have $(\underline{x}^{[n]})^{\lim}_R = (\underline{x}^{[n]})$ for all $n$. So
$$\ell(R/(\underline{x}^{[n]})^{\lim}_R) = \ell(R/(\underline{x}^{[n]}) = n^de(\underline{x}; R).$$
Combining with Lemma \ref{L5.3} we have
\begin{eqnarray*}
  J_{\underline{y},S}(n) &=& \ell(S/(\underline{y}^{[n]})^{\lim}_S) - n^d e(\underline{y}; S)\\
  &=& \big( \ell(R/(\underline{x}^{[n]})^{\lim}_R) + \ell(M/(\underline{x}^{[n]})^{\lim}_M) \big) - n^d \big(e(\underline{x}; R) + e(\underline{x}; M)  \big)\\
  &= &  \ell(M/(\underline{x}^{[n]})^{\lim}_M)  - n^de(\underline{x}; M)\\
  &=& J_{\underline{x}, M}(n).
\end{eqnarray*}
The assertion now follows from Theorem \ref{T5.2}. The proof is complete.
\end{proof}

\noindent
{\bf Acknowledgments:}
This paper was finished  during the  second author's visit at the Institute Fourier, Grenoble, France.
He would like to thank the Institute Fourier and LIA Formath Vietnam, CNRS, for their support and hospitality.


\begin{thebibliography}{99}
%\bibitem{AB58} M. Auslander and D.A. Buchsbaum, Codimension and multiplicity, {\it Ann. Math.} {\bf 68} (1958), 625--657.

\bibitem{B14} H. Brenner, Irrational Hilbert-Kunz multiplicities, {\it preprint}, ArXiv: 1305.5873v1 [math. AG].

\bibitem{C91} N.T. Cuong, On the dimension of the non-Cohen-Macaulay locus of local rings
admitting dualizing complexes, {\it Math. Proc. Cambridge Phil.
Soc.} {\bf 109} (1991), 479--488.

\bibitem{C92} N.T. Cuong, On the least degree of polynomials
bounding above the differences between lengths and multiplicities
of certain system of parameters in local rings, {\it Nagoya Math.
J.} {\bf 125} (1992), 105--114.

\bibitem{C95} N.T. Cuong, $p$-standard systems of parameters and $p$-standard ideals in local
rings, {\it Acta Math. Vietnam.} {\bf 20} (1995), 145--161.

\bibitem{CC07-1} N.T. Cuong and D.T. Cuong, $dd$-Sequences and partial
Euler-Poincaré characteristics of Koszul complex, {\it J. Algebra
Appl.} {\bf 6} (2007), 207--231.

%\bibitem{CC07-2} N.T. Cuong and D.T. Cuong, On sequentially
%Cohen-Macaulay modules, {\it Kodai Math. J.} {\bf 30} (2007), 409--428.

\bibitem{CC15} N.T. Cuong and D.T. Cuong, Annihilator ideals of local cohomology modules and
Macaulayfication, {\it preprint}.

\bibitem{CHL99} N.T. Cuong, N.T. Hoa and N.T.H. Loan, On certain length function associate to a system of
parameters in local rings, {\it Vietnam. J. Math.} {\bf 27} (1999),
259--272.

\bibitem{CK99} N.T. Cuong and V.T. Khoi, Module whose
local cohomology modules have Cohen-Macaulay Matlis duals, in:
{\it Proc. of Hanoi conf. on Algebraic Geometry, Commutative
Algebra and Computational Methods}, D. Eisenbud(ed.),
Springer-Verlag, 1999, 223--231.

\bibitem{CM96} N.T. Cuong and N.D. Minh, On the lengths of Koszul homology modules and generalized fractions, {\it Math. Proc. Cambridge Philos. Soc.} {\bf 120} (1996), 31--42.

%\bibitem {CM00} N.T. Cuong and N.D. Minh, Lengths of generalized fractions of modules having small polynomial type. {\it Math. Proc. Cambridge Philos. Soc.} {\bf 128} (2000), 269--282.

\bibitem{CMN03} N.T. Cuong, M. Morales and L.T. Nhan, On the length of generalized fractions. {\it J. Algebra} {\bf 265} (2003), 100--113.

\bibitem{CN03} N. T. Cuong and L. T. Nhan, Pseudo Cohen-Macaulay and
pseudo generalized Cohen-Macaulay module, {\it J. Algebra} {\bf 267} (2003), 156--177.

\bibitem{CQ11} N.T. Cuong and P.H. Quy, A splitting theorem for local cohomology and its applications, {\it J. Algebra} {\bf 331} (2011),
512--522.

\bibitem{CQ15} N.T. Cuong and P.H. Quy, On the limit closure of sequence of elements in local rings, {\it Proc. of the 6-th Japan-Vietnam Joint Seminar on Comm. Algebra}, Hayama, Japan 2010,  pp.127--135.

\bibitem{CQ16} N.T. Cuong and P.H. Quy, On the splitting of local cohomology and
the structure of finitely generated modules in local rings, {\it preprint}.

\bibitem{GN01} S. Goto and Y. Nakamura, Multiplicity and tight
closures of parameters, {\it J. Algebra} {\bf 244} (2001), 302--311.

\bibitem{GY86} S. Goto and K. Yamagishi, The theory of unconditioned strong
$d$-sequences and modules of finite local cohomology, {\it preprint}
(unpublished).

\bibitem{Hu82} C. Huneke, Theory of $d$-sequences and powers of ideals, {\it Adv.
Math.} {\bf 46} (1982), 249--279.

\bibitem{K69} E. Kunz, Characterizations of regular local rings of characteristic $p$, {\it Amer. J. Math.}
{\bf 91} (1969), 772--784.

\bibitem{M07} M. Morales, On the $S_2$-fications of some toric varieties, {\it Comm. Algebra} {\bf 35} (2007), 2409--2430

\bibitem{MN03} M. Morales and L.T. Nhan, On length of generalized fractions $1/(x_1^n, ..., x_d^n; 1)$, {\it Vietnam J. Math.} {\bf 31} (2003), 359--365.

\bibitem{M95} N.D. Minh, On the least degree of polynomials bounding above the differences between multiplicities and length of generalized fractions, {\it Acta Math. Vietnam.} {\bf 20} (1995), 115--128.

\bibitem{Mo83} P. Monsky, The Hilbert-Kunz function, {\it Math. Ann.} {\bf 263} (1983), 43--49.

\bibitem{Sch82} P. Schenzel, {\it Dualisierende komplexe in der lokalen algebra und Buchsbaum - ringe},
Lect. Notes in Math., Springer-Verlag Berlin - Heidelberg - New
York, 1982.

\bibitem{ShH85} R.Y. Sharp and M.A. Hamieh, Length of certain generalized fractions, {\it J. Pure Appl. Algebra}
{\bf 38} (1985), 323--336.

\bibitem{SZ82} R.Y. Sharp and H. Zakeri, Modules of generalized fractions, {\it Mathematika} {\bf 29} (1982), 32--41.

\bibitem{Tr83} N.V. Trung, Absolutely superficial sequence, {\it Math. Proc. Cambridge Phil. Soc} {\bf 93} (1983),
35--47.

\bibitem{Tr86} N.V. Trung, Toward a theory of generalized Cohen-Macaulay modules, {\it Nagoya Math. J.} {\bf 102} (1986), 1--49.

\end{thebibliography}
\end{document}